\renewcommand{\a}{\alpha}
\renewcommand{\b}{\beta}
\renewcommand{\phi}{\varphi}
\newcommand{\cC}{\mathscr{C}}
\newcommand{\Dec}{\mathrm{Dec}}
\newcommand{\sk}{\mathrm{sk}}
\newcommand{\cosk}{\mathrm{cosk}} 
\newcommand{\Wbar}{\overline{W}} 
\newcommand{\disc}{\mathrm{disc}} 
\newcommand{\coeq}{\mathrm{coeq}} 
\newcommand{\op}{\mathrm{op}} 
\newcommand{\id}{\mathrm{id}} 
\newcommand{\Set}{\mathbf{Set}} 
\newcommand{\Gp}{\mathbf{Gp}} 
\newcommand{\Ab}{\mathbf{Ab}}
\newcommand{\Ch}{\mathbf{Ch}} 
\renewcommand{\S}{\mathbf{S}}
\renewcommand{\SS}{\mathbf{SS}} 
\newcommand{\SGpd}{\mathbf{SGpd}} 
\newcommand{\Hom}{\mathrm{Hom}} 
\newcommand{\last}{\mathrm{last}} 
\newcommand{\first}{\mathrm{first}} 
\newcommand{\Tot}{\mathrm{Tot}}
\theoremstyle{plain}
\newtheorem{theorem}{Theorem}
\newtheorem{lemma}[theorem]{Lemma}
\newtheorem{proposition}[theorem]{Proposition}
\newtheorem{corollary}[theorem]{Corollary}
\theoremstyle{definition}
\newtheorem{definition}[theorem]{Definition}
\theoremstyle{definition}
\theoremstyle{remark}
\title{D\'{e}calage and Kan's simplicial loop group functor} 
\author{Danny Stevenson\thanks{The author was supported by the 
 Engineering and Physical Sciences Research Council [grant number EP/I010610/1]}\\
School of Mathematics and Statistics\\
University of Glasgow  \\
15 University Gardens\\
Glasgow G12 8QW\\
United Kingdom\\
email: {\tt Danny.Stevenson@glasgow.ac.uk} }
\begin{document} 
\maketitle
\begin{abstract}
Given a bisimplicial set, there are two ways to extract 
from it a simplicial set: the diagonal simplicial set and the less 
well known total simplicial set of Artin and Mazur.  
There is a natural comparison map between these 
simplicial sets, and it is a theorem due to Cegarra and 
Remedios and independently Joyal and Tierney, that this comparison map is a weak 
equivalence for any bisimplicial set.  In this paper we will give a new, elementary 
proof of this result.  As an application, we will revisit 
Kan's simplicial loop group functor $G$.  We will 
give a simple formula for this functor, which is based on a 
factorization, due to Duskin, of Eilenberg and Mac Lane's 
classifying complex functor $\overline{W}$. 
We will give a new, short, proof of Kan's result that the unit 
 map for the adjunction $G\dashv \overline{W}$ is a weak equivalence 
for reduced simplicial sets.       
\newline 

\smallskip 
 \noindent 
 2010 {\it Mathematics Subject Classification} 18G30, 55U10.\\
 Keywords: simplicial loop group, d\'{e}calage, Artin-Mazur total simplicial set.
\end{abstract}

\tableofcontents

\section{Introduction} 

The aim of this paper is to give new and hopefully simpler proofs of two theorems in 
the theory of simplicial sets, the first being a generalization to simplicial sets of 
Dold-Puppe's version \cite{DP} of the Eilenberg-Zilber theorem from homological algebra, 
the second being an old result of Kan's on simplicial loop groups.  This 
first result is due to Cegarra and Remedios and independently to 
Joyal and Tierney.  

Recall that if $C$ is a double complex of abelian groups concentrated in the first 
quadrant then there are two ways in which 
one can associate to it an ordinary complex.  One can form the total complex 
$\Tot\, C$ which in degree $n$ is equal to 
\[
(\Tot\, C)_n = \bigoplus_{p+q=n} C_{p,q},
\]
or one can form the diagonal complex $dC$ which in degree $n$ is equal to 
\[
(dC)_n = C_{n,n}.  
\]
There is a natural comparison map $dC\to \Tot\, C$ and the generalized Eilenberg-Zilber theorem \cite{DP, GJ} 
says that this comparison map is a chain homotopy equivalence.  

There is a generalization of this comparison with 
chain complexes, or equivalently simplicial abelian groups, 
replaced by simplicial sets.  Just as we can 
form the diagonal of a double complex 
we can also form the diagonal $dX$ of a {\em bisimplicial set} $X$.    
This is the simplicial 
set obtained by precomposing the functor 
$X\colon \Delta^\op\times \Delta^\op\to \Set$ 
with the opposite of the functor $\delta\colon 
\Delta\to \Delta\times \Delta$ given by $\delta([n]) =  ([n],[n])$, i.e.\ 
$dX = X\delta$, so that the set of $n$-simplices of $dX$ is 
$(dX)_n = X_{n,n}$.  
There is another, less well known, way to 
form a simplicial set from $X$.  Namely one can form 
what is variously known as the {\em total 
simplicial set} or {\em Artin-Mazur codiagonal} 
$T X$ of $X$ (see \cite{AM}).  This construction 
extends to define a functor $T\colon \SS\to \S$ 
from the category $\SS$ of bisimplicial sets to the category $\S$ of 
simplicial sets.  

The construction $T X$ is the analog for simplicial sets of the process of forming the total 
complex $\Tot\,  C$ of a double complex $C$.  In fact, if 
\[
N\colon s\Ab\leftrightarrows \Ch_{\geq 0}\colon \Gamma 
\]
denotes the Dold-Kan correspondence, then $NT A$ is isomorphic to 
$\Tot\, NA$ (see \cite{CR}).  

As mentioned above, the Eilenberg-Zilber theorem in homological algebra has a 
generalization for simplicial sets.  For any bisimplicial set
$X$, there is a natural comparison map 
\[
dX\to T X
\]
between the diagonal simplicial set of $X$ and the total simplicial set of $X$.  We have the 
following result.  
\begin{theorem}[\cite{CR}, \cite{JT2}]
\label{CR thm}
Let $X$ be a bisimplicial set.  Then the comparison map $dX\to T X$ is a weak equivalence.  
\end{theorem}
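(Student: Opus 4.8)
The plan is to exhibit the comparison map $dX \to TX$ as built out of décalage. The key structural fact is that $T$ is right adjoint to a functor $\S \to \SS$, and that this right adjoint can be computed by an end, $(TX)_n = \int_{[p]} X_{p, \, n-p}$ in an appropriate sense, or more precisely via the explicit simplicial-set description: an $n$-simplex of $TX$ is a sequence $(x_0, \dots, x_n)$ with $x_p \in X_{p, n-p}$ satisfying matching conditions $d_0^v x_p = d_{p+1}^h x_{p+1}$. The diagonal functor $d$ likewise has a right adjoint, and the comparison $dX \to TX$ is adjoint to a map of cosimplicial objects. So the first step is to set up these adjunctions cleanly and identify the comparison map explicitly on simplices: an $n$-simplex $z \in X_{n,n}$ of $dX$ is sent to the sequence whose $p$-th entry is obtained from $z$ by applying the "last $n-p$" horizontal face maps and the "first $p$" vertical face maps (the precise indexing is the Artin--Mazur shuffle).

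**Next I would** reduce to a universal case by a standard density/skeletal argument. Every bisimplicial set is a colimit of bisimplicial sets of the form $\Delta[p,q] = \Delta[p] \times \Delta[q]$ (the representables), and both $d$ and $T$ preserve colimits — $d$ obviously, and $T$ because it is a right adjoint but also, crucially, because it preserves colimits along monomorphisms and is built degreewise from finite limits; one checks $T$ commutes with the relevant filtered colimits and pushouts. Weak equivalences between simplicial sets are detected after such colimits provided one has cofibrancy (everything is cofibrant in the standard model structure) and the maps glue along cofibrations. So it suffices to prove the comparison $d(\Delta[p] \times \Delta[q]) \to T(\Delta[p] \times \Delta[q])$ is a weak equivalence for all $p, q$. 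Here $d(\Delta[p]\times\Delta[q]) = \Delta[p]\times\Delta[q]$, which is contractible, so the real content is that $T(\Delta[p]\times\Delta[q])$ is contractible.

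**The heart of the argument**, then, is to show $T(\Delta[p] \times \Delta[q])$ is weakly contractible, and this is exactly where décalage enters. The total complex functor $T$ satisfies a recursive décalage identity: there is a pullback-like decomposition expressing $T(X)$ in terms of $T$ applied to the décalage $\mathrm{Dec}\, X$ in one simplicial direction, together with the extra degeneracy/face data that décalage provides. Concretely, $\mathrm{Dec}$ of a simplicial set is always contractible (it has an extra degeneracy), and the total-complex functor interacts with $\mathrm{Dec}$ in a way that lets one induct on, say, $p+q$. The base case $p = q = 0$ is trivial since $T(\Delta[0]\times\Delta[0]) = \Delta[0]$. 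For the inductive step I would use that $\Delta[p] \times \Delta[q]$, viewed bisimplicially, relates to $\mathrm{Dec}(\Delta[p]) \times \Delta[q]$ or $\Delta[p] \times \mathrm{Dec}(\Delta[q])$ via a map whose fibers are understood, and that $T$ carries the contractibility of $\mathrm{Dec}$ to contractibility of the total object — this is the elementary core the paper advertises, replacing the spectral-sequence or model-categorical arguments of \cite{CR} and \cite{JT2}.

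**The main obstacle** I anticipate is organizing the décalage recursion for $T$ precisely: one must pin down exactly how $T(\mathrm{Dec}_v X)$ (décalage in the vertical direction) sits inside or maps to $T(X)$, verify that the extra degeneracy on $\mathrm{Dec}$ induces a genuine simplicial contracting homotopy (or at least a weak equivalence) at the level of $T$, and ensure the bookkeeping with the two classes of face maps in the Artin--Mazur description is consistent. A secondary technical point is justifying that $T$ preserves the colimits and cofibrations needed for the reduction step; this requires knowing $T$ sends monomorphisms of bisimplicial sets to monomorphisms of simplicial sets and commutes with pushouts along them, which follows from the explicit matching-sequence description but should be stated carefully. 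Once the contractibility of $T$ on bisimplicial representables is in hand, gluing back up via the skeletal filtration of an arbitrary $X$ and a standard induction on skeleta — using that cofibrations are closed under pushout and that a map of cofibrant simplicial sets which is a weak equivalence on each cell is a weak equivalence — completes the proof.
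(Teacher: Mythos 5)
Your reduction to representables is where the argument breaks down. You justify it by asserting that $T$ preserves the needed colimits ``because it is a right adjoint'' and ``preserves colimits along monomorphisms''; but $T$ is the \emph{right} adjoint of $\Dec$, so adjointness gives preservation of limits, not colimits, and the degreewise finite-limit description~\eqref{equalizer2} gives filtered colimits at best. In fact $T$ does not take pushouts along monomorphisms to pushouts in general: if $X=A\cup_C B$, an $n$-simplex of $TX$ is a compatible string $(x_0,\dots,x_n)$ with $x_p\in X_{p,n-p}$, and nothing prevents some components from lying in $A\setminus C$ and others in $B\setminus C$, the matching conditions $d_0^v x_p=d_{p+1}^h x_{p+1}$ being satisfied in $C$; such mixed simplices need not come from $TA\cup_{TC}TB$. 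So the skeletal induction, which requires $T$ to carry the cell attachments of bisimplicial representables to (homotopy) pushouts and to preserve the sequential colimit of the skeleta, is precisely the delicate point --- it is essentially why the proof in \cite{CR} is long --- and your proposal leaves it unproved and in fact misstates the relevant adjointness. The second load-bearing step, the contractibility of $T(\Delta[p]\times\Delta[q])$ via a ``d\'{e}calage recursion for $T$'', is also only announced: no identity relating $T$ of a d\'{e}calage to $T$ of the original object is formulated, let alone proved, and you flag this yourself as the main obstacle. As it stands the proposal is a plan whose two essential steps are both open.

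For contrast, the paper's proof avoids all questions about $T$ and colimits. It factors the comparison as $dX\to dd_*TX\to TX$ using the right adjoint $d_*$ of $d$; the second map is a weak equivalence because the counit $dd_*\to 1$ is one (a consequence of the single input that $d$ sends levelwise weak equivalences to weak equivalences), and the first map is exhibited as the diagonal of a composite of a row-wise and a column-wise simplicial homotopy equivalence, built from the extra degeneracies of $\Dec\Delta[n]$ over $p_1^*\Delta[n]$ and $p_2^*\Delta[n]$ (Lemma~\ref{lem:simplicial homotopy}), applied inside mapping objects $\SS(-,X)$. So the contractibility of d\'{e}calage that you want to exploit does appear, but on the cosimplicial object $\Dec\Delta$ rather than on $TX$ itself, where only levelwise homotopy equivalences are needed. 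To salvage your outline you would have to establish the gluing properties of $T$ on skeleta (which amounts to redoing the analysis of \cite{CR}) or reroute the argument through $d_*$ as the paper does.
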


The first published proof of this result was given in \cite{CR}, with the authors 
noting that this fact is stated without proof in \cite{Cordier} where it is attributed to Zisman (unpublished).  
When $X$ is a bisimplicial group, a closely related result was proven by 
Quillen \cite{Quillen}.   
The proof of Theorem~\ref{CR thm} given in \cite{CR} is unfortunately somewhat complicated, and so it is of interest to have a simpler proof.  
One such proof, incorporating some ideas of Cisinski, is given by Joyal and Tierney in their forthcoming book \cite{JT2}.  
We shall give here a new proof, which we think is fairly elementary --- in particular it uses 
nothing more than the fact that the diagonal functor $d$ sends level-wise weak equivalences to 
weak equivalences.   

In the second part of the paper we present a simple construction 
of Kan's simplicial loop group functor as an application of Theorem~\ref{CR thm}.     
Recall that in \cite{Kan1}, Kan defined a functor $G\colon \S\to s\Gp$ 
which is left adjoint to the classifying complex functor 
$\overline{W}\colon s\Gp\to \S$ of Eilenberg and Mac Lane 
\cite{EM}.   He was able to show that, when $X$ is reduced (i.e.\  when
$X$ is a simplicial set with only one vertex), the principal $GX$ bundle 
$X_\eta$ on $X$ induced by the unit map $\eta\colon X\to \overline{W}GX$ has weakly 
contractible total space.  Kan's proof of this last fact involves 
showing firstly that $X_\eta$ is simply connected, and 
secondly that $X_\eta$ is acyclic in the sense that 
it has vanishing reduced homology in all degrees.  

We will show that both the construction of the functor $G$ 
and the proof that the unit map is a weak equivalence can be greatly illuminated 
and simplified by considering a factorization (first noticed by Duskin) of $\overline{W}$ involving the  
functor $T$.  In fact we hasten to point out that this last section of the 
paper makes no great claim to originality, we find it hard to believe 
that some of the results of this section were not known to Duskin, although we 
cannot find any evidence for this in his published papers.  We also point out that in 
their forthcoming book \cite{JT2} and their paper \cite{JT3} Joyal and Tierney 
prove more general statements in the context of simplicially enriched groupoids.  
Using Duskin's factorization we will give a simple formula for the left adjoint 
to $\overline{W}$ (see Proposition~\ref{left adjoint for Wbar}).  
In Theorem~\ref{Kan's thm} we will apply this formula to give a simple and 
direct proof of Kan's theorem that 
the unit map of the adjunction $G\dashv \overline{W}$ is a 
weak equivalence whenever $X$ is reduced.  To the best of our knowledge 
this proof is new (we note that an essential ingredient for the proof 
is Theorem~\ref{CR thm}).  We point out          
that in \cite{Wa} Waldhausen 
described another approach to the construction of $G$, 
nevertheless we feel our approach (which proceeds along 
different lines) is still of some interest.

\section{The d\'{e}calage comonad}
\label{app:one}

We begin by recalling the definition and main properties of 
the d\'{e}calage and total d\'{e}calage functors of Illusie \cite{Illusie}.

\subsection{The d\'{e}calage or shift functor} 

Let $\Delta_a $ denote the augmented simplex category, 
in other words the simplex category $\Delta$ together with 
the additional object $[-1]$, the empty set (the initial 
object of $\Delta_a $).  We will write 
$as\cC$ for the category $[\Delta_a ^\op,\cC]$ of augmented 
simplicial objects in a category $\cC$, which we will assume to be complete and cocomplete.     
Recall (see for example VII.5 of \cite{CWM}) that $\Delta_a $ is a monoidal 
category with unit $[-1]$ under the operation of ordinal 
sum, which operation we will denote by $\sigma$ (following Joyal and 
Tierney).  If $[m]$, $[n]\in \Delta_a$ then $\sigma([m],[n]) = [m+n+1]$, and the operation    
$\sigma$ gives rise to a bifunctor $\sigma \colon \Delta_a\times \Delta_a\to \Delta_a$ 
which sends a morphism 
\[
(\a,\b)\colon ([m],[n])\to ([m'],[n'])
\]
in $\Delta_a\times \Delta_a$ 
to the morphism $\sigma(\a, \b)\colon [m+n+1]\to [m'+n'+1]$ 
in $\Delta_a$ defined by 
\[
\sigma(\a, \b)(i) = \begin{cases} 
\a(i) & \text{if}\ 0\leq i\leq m \\ 
\b(i - m - 1) + m' + 1 & \text{if}\ m+1\leq i\leq m+n+1. 
\end{cases} 
\]
$(\Delta_a,\sigma)$ is not a symmetric monoidal category --- 
while $\sigma([m], [n]) = \sigma([n], [m])$, it need not be the case 
that $\sigma(\a, \b) = \sigma(\b, \a)$.  
The monoidal structure on $\Delta_a $ allows us to 
define a functor $\sigma(-, [0])\colon \Delta_a \to \Delta$ 
which sends $[n]\in \Delta_a $ to $\sigma([n], [0]) = [n+1]$ in 
$\Delta$.  
We have the following definition which we believe is originally due 
to Illusie.  
\begin{definition}[\cite{Illusie}] 
Define $\Dec_0\colon s\cC\to as\cC$ to be the functor 
given by restriction along $\sigma(-, [0])\colon \Delta_a\to \Delta$, so that 
if $X$ is a simplicial object in $\cC$ then $\Dec_0X$ is the augmented simplicial object given by
\[ 
\Dec_0X([n]) = X([n+1]),  
\] 
whose face maps $d_i\colon (\Dec_0 X)_n\to 
(\Dec_0 X)_{n-1}$ are given by $d_i\colon X_{n+1}\to X_n$ for 
$i=0,1,\ldots, n$, and whose degeneracy maps 
$s_i\colon (\Dec_0 X)_n\to (\Dec_0 X)_{n+1}$ are given by 
$s_i\colon X_{n+1}\to X_{n+2}$ for $i=0,1,\ldots, n$.  
The augmentation $(\Dec_0 X)_0\to X_0$ is given 
by $d_0\colon X_1\to X_0$.  

$\Dec_0X$ is obtained from $X$ by forgetting the top face and degeneracy map at each 
level and re-indexing by shifting degrees up by one.  
Thus the augmented simplicial object $\Dec_0X$ can be pictured as 
\[ 
\xymatrix{ 
X_0 & \ar[l]_-{d_0} X_1 \ar@<-2.5ex>[r]^-{s_0} & X_2 
\ar@<-2.5ex>[l]_-{d_0} \ar[l]_-{d_1} \ar@<-2.5ex>[r]^-{s_0} \ar@<-5ex>[r]^-{s_1} 
& X_3 \ar@<-5ex>[l]_-{d_0} \ar@<-2.5ex>[l]_-{d_1} \ar[l]_-{d_2} & \cdots } 
\]   
Note that the simplicial identity $d_0d_1 = d_0d_0$ shows 
that $d_0\colon X_1\to X_0$ is an augmentation.  
 
There is an analogous functor $\Dec^0\colon s\cC\to as\cC$ 
given by restriction along the functor $\sigma([0], -)\colon 
\Delta_a\to\Delta$ --- thus $\Dec^0$ is the functor which forgets 
the bottom face and degeneracy map at each level.  The functors 
$\Dec_0$ and $\Dec^0$ are usually called the {\em d\'{e}calage} or {\em shifting} functors.  
More generally we can define functors $\Dec_n\colon s\cC\to as\cC$ 
and $\Dec^n\colon s\cC\to as\cC$ induced by restriction 
along $\sigma(-, [n])\colon \Delta_a\to \Delta$ and 
$\sigma([n], -)\colon \Delta_a\to \Delta$ respectively.       
\end{definition}

The relation between $\Dec_nX$ and $\Dec^nX$ can be 
easily understood through the notion of the opposite simplicial object.  
Let $\tau\colon \Delta\to \Delta$ denote the automorphism 
of $\Delta$ which reverses the order of each ordinal $[n]$, 
or equivalently sends the category $[n]$ to its 
opposite category.  Note that $\tau(\sigma([n], [m])) = 
\sigma(\tau([m]),\tau([n]))$ for any $[n],[m]\in \Delta$.  
If $X$ is a simplicial object then we write $X^o$ for the 
simplicial object obtained by precomposing $X$ with the functor $\tau^\op$.  
The simplicial object $X^o$ is called the 
{\em opposite} simplicial object of $X$ in \cite{Joyal}.  Note that 
$(\Dec_0X)^o = \Dec^0(X^o)$ by the following calculation:
\[
(\Dec_0X)^o([n]) = \Dec_0X(\tau([n]) = X(\sigma([0], 
\tau([n]))) = X(\sigma(\tau([n]), [0])),
\]
since $\tau[0] = [0]$.  It follows that $(\Dec_nX)^o = \Dec^n(X^o)$ for any $n\geq 0$.  

There are canonical comonads underlying the functors 
$\Dec_0$ and $\Dec^0$, when these functors are thought of 
as endofunctors on $s\cC$ by forgetting augmentations.  
As is well known, $[0]$ determines a monoid in $\Delta$ whose 
multiplication is given by the canonical map 
$[1]\to [0]$.  This monoid is universal in a certain precise sense 
(see Proposition 5.1 in Chapter VII of \cite{CWM}).   

The monoid $[0]$ determines a corresponding comonoid in 
$\Delta^\op$ which in turn induces by composition the two comonads $\Dec_0$ and 
$\Dec^0$ in $s\cC$.  The counit of the comonad $\Dec_0$ is induced by the 
natural transformation $[n]\to \sigma([0], [n])$ and hence is 
given on a simplicial object $X$ by the simplicial map 
$\Dec_0 X\to X$ which in degree $n$ is the last face map 
$d_{n+1}\colon X_{n+1}\to X_n$.  We will write 
$d_\last\colon \Dec_0X\to X$ for this map.  

Likewise, the counit of the comonad $\Dec^0$ is induced by the 
natural transformation $[n]\to \sigma([n], [0])$ and hence is given 
on a simplicial object $X$ by the simplicial map $\Dec^0 X\to X$ 
which in degree $n$ is the first face map $d_{0}\colon X_{n+1}\to X_n$.  We will write 
$d_\first\colon \Dec^0X\to X$ for this map.    

When $\Dec_0$ and $\Dec^0$ are regarded as endofunctors on $s\cC$, we see that the functors 
$\Dec_n$ and $\Dec^n$ (also thought of as endofunctors on $s\cC$) are given by $\Dec_n = (\Dec_0)^n$ and 
$\Dec^n = (\Dec_0)^n$ respectively.  

\subsection{Contractibility of the d\'{e}calage functor} 
\label{sec:contractible}
It is an important fact that $\Dec_0X$ and $\Dec^0X$ are not 
just augmented simplicial objects, they are actually contractible augmented 
simplicial objects in the following sense. 

Recall that the augmentation map of an augmented simplicial object $\epsilon \colon X\to X_{-1}$ is a  
a deformation retraction if there exists a simplicial map $s\colon X_{-1}\to X$ (with $X_{-1}$ 
is regarded as a constant simplicial object) which is a section 
of the projection $\epsilon$ and is such that $s\epsilon$ is simplicially homotopic 
to the identity map on $X$.     

A sufficient condition for $s\epsilon$ to be simplicially homotopic to the identity map on 
$X$ is that there exist 
for each $n\geq -1$, maps $s_{n+1}\colon X_n\to X_{n+1}$ with $s_0 = s$, which 
act as `extra degeneracies on the right' in the sense that the following identities hold: 

\begin{align*} 
& d_is_n = s_{n-1}d_i\  \text{for}\  0\leq i<n, \\ 
& d_ns_n = \id, \\ 
& s_is_n = s_{n+1}s_i\ \text{for}\ 0\leq i \leq n, 
\end{align*}  

The following definition is standard.  

\begin{definition}
Let $\epsilon\colon X\to X_{-1}$ be an augmented simplicial object in $\cC$.  
By a {\em contraction} of $X$ we will mean  
the data of the section $s\colon X_{-1}\to X$ together with the 
extra degeneracies $s_{n+1}$ as described above.  We will say that 
$X$ is {\em contractible} if it has such a contraction.  
\end{definition} 

A map of contractible augmented simplicial 
objects is a map of the underlying 
augmented simplicial objects which preserves 
the corresponding sections $s$ and the extra degeneracies (as in \cite{Duskin} we will 
sometimes say that such a map is {\em coherent}).  We will write 
$a_cs\cC$ for the category of contractible augmented simplicial objects and coherent maps. 

Given the data of such a collection of maps $s_{n+1}$ as above, we define maps 
$h_i\colon X_n\to X_{n+1}$ by the formula 
\[
h_i  = s_0^{n-i}s_{n+1}d_0^{n-i}.  
\]
It is easy to check that the maps $h_i$ satisfy the conditions 
(i)--(iii) in Definition 5.1 of \cite{May-SOAT}.  The $h_i$ then piece together 
to define a simplicial homotopy $h\colon X\otimes \Delta[1]\to X$ from $s\epsilon$ to 
the identity on $X$, analogous to Proposition 6.2 in \cite{May-SOAT}.  
Here, if $K$ is a simplicial set, $X\otimes K$ denotes the tensor for the usual 
structure of $s\cC$ as a simplicially enriched category, so that $X\otimes K$ has $n$-simplices given by 
\begin{equation}
\label{simplicial structure}
(X\otimes K)_n = \coprod_{k\in K_n} X_n. 
\end{equation}
In degree 
$n$, the map $h\colon X\otimes \Delta[1]\to X$ is given by 
$d_{i+1}h_i\colon (X_n)_\a \to X_n$ on the summand 
$(X_n)_\a$ of $(X\otimes \Delta[1])_n$ corresponding to the map $\a\colon [n]\to [1]$  
determined by $\a^{-1}(0) = [i]$.
We summarize this discussion in the following lemma.  

\begin{lemma} 
Let $\epsilon\colon X\to X_{-1}$ be a contractible augmented simplicial object in $\cC$.  
Then there is a simplicial homotopy $h\colon X\otimes \Delta[1]\to X$ in $s\cC$ between $s\epsilon$ and $1_X$.  
\end{lemma}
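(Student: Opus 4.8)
The plan is to organize the discussion immediately preceding the lemma into a proof. Recall from Definition~5.1 of \cite{May-SOAT} that giving a simplicial homotopy $h\colon X\otimes\Delta[1]\to X$ between two morphisms $X\to X$ is the same as giving a family of morphisms $h_i\colon X_n\to X_{n+1}$ satisfying the identities (i)--(iii) of that definition together with the two boundary conditions pinning down the endpoints; from such a family, $h$ is recovered on the summand $(X_n)_\a$ of $(X\otimes\Delta[1])_n$ by the rule $h|_{(X_n)_\a} = d_{i+1}h_i$, where $\a\colon[n]\to[1]$ is the simplex with $\a^{-1}(0) = [i]$. So I would take the explicit morphisms $h_i = s_0^{\,n-i}s_{n+1}d_0^{\,n-i}\colon X_n\to X_{n+1}$ introduced above and check that they form such a family with endpoint values $s\epsilon$ and $1_X$; the lemma then falls out of this standard packaging, the assembly step being the analogue, for simplicial objects in $\cC$, of Proposition~6.2 of \cite{May-SOAT}.

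First I would dispose of the two boundary identities, both of which are immediate. The endpoint $1_X$ comes from $d_{n+1}h_n = d_{n+1}s_{n+1}$, which is $1_X$ by the extra-degeneracy identity $d_ns_n = \id$. For the endpoint $s\epsilon$ one examines the string in which the section $s = s_0$ sits between a block of ordinary degeneracies and a block of ordinary face maps: a single cancellation via the \emph{ordinary} simplicial identity $d_0s_0 = \id$ shortens the outer block, after which the remaining face maps are precisely the iterated augmentation $\epsilon$ and the remaining degeneracies together with the section are precisely the degree-$n$ component of $s\colon X_{-1}\to X$, so the string is exactly $s\epsilon$.

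The substance of the argument is the verification of the identities (i)--(iii) relating the $h_i$ to the faces and degeneracies of $X$. For each of these I would split into a handful of cases according to where the relevant face or degeneracy index lies relative to $i$, commute that face or degeneracy through the outer block $s_0^{\,n-i}$ (respectively the inner block $d_0^{\,n-i}$) by the ordinary simplicial identities, and then conclude with a single application of one of the three extra-degeneracy identities $d_is_n = s_{n-1}d_i$ $(i<n)$, $d_ns_n = \id$, $s_is_n = s_{n+1}s_i$ $(i\le n)$. This is the mirror image of the classical contraction argument for extra degeneracies ``on the left''; indeed, one could bypass the calculation altogether and deduce the lemma from the dual, left-handed version of this classical fact by passing to the opposite simplicial object $X^o$, since the operation $(-)^o$ interchanges left and right extra degeneracies --- exactly as $(\Dec_0X)^o = \Dec^0(X^o)$ --- while $\Delta[1]^o\cong\Delta[1]$. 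The only place any real effort is needed is keeping the index bookkeeping straight in these case checks; I do not expect any conceptual obstacle. With the identities in hand, the family $\{h_i\}$ assembles into the required homotopy $h\colon X\otimes\Delta[1]\to X$ from $s\epsilon$ to $1_X$.
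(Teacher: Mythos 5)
Your proposal is correct and follows essentially the same route as the paper: the paper's argument is precisely to set $h_i = s_0^{\,n-i}s_{n+1}d_0^{\,n-i}$, verify the identities (i)--(iii) of Definition~5.1 of \cite{May-SOAT}, and assemble the $h_i$ into $h\colon X\otimes\Delta[1]\to X$ via $d_{i+1}h_i$ on the summand indexed by $\a$ with $\a^{-1}(0)=[i]$, as in Proposition~6.2 of \cite{May-SOAT}. Your boundary checks and the remark about passing to the opposite simplicial object are consistent embellishments, not a different method.
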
 

Clearly, the degeneracy $s_{n+1}\colon X_{n+1}\to X_{n+2}$ for $n\geq 0$ equips 
$\Dec_0 X$ with an extra degeneracy in the above sense.  Therefore we have the following well known result.  
\begin{lemma} 
\label{lem:def retract}
For any simplicial object $X$ in $\cC$, the augmentation $d_0\colon \Dec_0 X\to X_0$ 
is a deformation retract.  An analogous statement is true for $\Dec^0X$.  
\end{lemma}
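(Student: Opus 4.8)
The plan is to exhibit an explicit contraction of the augmented simplicial object $\epsilon\colon\Dec_0X\to X_0$ and then to quote the preceding lemma; since a contraction in particular carries a section of $\epsilon$, and the preceding lemma turns that contraction into a simplicial homotopy between $s\epsilon$ and the identity, this is enough. So the whole of the proof is the construction of the contraction data together with the verification of the three families of identities in the definition of a contraction.

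For the contraction of $\Dec_0X$, I would take the section $s\colon X_0\to(\Dec_0X)_0=X_1$ to be the degeneracy operator $s_0$ of $X$, and for each $n\geq0$ I would take the extra degeneracy $(\Dec_0X)_n\to(\Dec_0X)_{n+1}$, a map $X_{n+1}\to X_{n+2}$, to be the top degeneracy operator $s_{n+1}$ of $X$; this is precisely the degeneracy operator that $\Dec_0$ throws away at that level. With these choices the verification is purely formal. At level $n$ the face operators of $\Dec_0X$ are $d_0,\dots,d_n\colon X_{n+1}\to X_n$ and its degeneracy operators are $s_0,\dots,s_n\colon X_{n+1}\to X_{n+2}$, all inherited from $X$; hence the relation $d_is_n=s_{n-1}d_i$ for $0\leq i<n$ is the simplicial identity $d_is_j=s_{j-1}d_i$ for $i<j$, the relation $d_ns_n=\id$ is the simplicial identity $d_js_j=\id$, and the relation $s_is_n=s_{n+1}s_i$ for $0\leq i\leq n$ is the simplicial identity $s_is_j=s_{j+1}s_i$ for $i\leq j$. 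Feeding this contraction into the preceding lemma produces the simplicial homotopy between $s\epsilon$ and $1_{\Dec_0X}$, which is exactly the assertion that $\epsilon\colon\Dec_0X\to X_0$ is a deformation retract.

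For $\Dec^0X$ one can either run the mirror-image argument --- here $\Dec^0X$ discards at each level the bottom face and degeneracy operators of $X$, the bottom degeneracies $s_0$ of $X$ play the role of ``extra degeneracies on the left'', and the corresponding contraction identities again reduce to simplicial identities of $X$, using the evident left-handed analogue of the preceding lemma --- or, more economically, one can invoke the identity $(\Dec_0X)^o=\Dec^0(X^o)$ recorded above: the operation $(-)^o$ is induced by the involution $\tau$ of $\Delta$, hence is an automorphism of $s\cC$, and it preserves constant objects, sections, and simplicial homotopies (the last because $\Delta[1]^o\cong\Delta[1]$), so it sends a deformation retraction to a deformation retraction; applying the $\Dec_0$ case to $X^o$ then yields the $\Dec^0$ case for $X$.

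I do not expect any real obstacle here: the statement is essentially the remark --- already signalled in the sentence preceding it --- that the operators discarded in forming $\Dec_0X$ from $X$ are exactly the ones needed to contract $\Dec_0X$. The only point calling for a little care is the index shift: the extra degeneracy of $\Dec_0X$ at level $n$ is the operator $s_{n+1}$ of $X$, not $s_n$, and one should check that its index does not collide with those of the operators $s_0,\dots,s_n$ of $X$ that $\Dec_0X$ retains at level $n$ --- which is clear, since they are pairwise distinct.
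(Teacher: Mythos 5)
Your proof is correct and is essentially the paper's argument: the paper simply observes that the discarded top degeneracies $s_{n+1}\colon X_{n+1}\to X_{n+2}$ furnish a contraction of $\Dec_0 X$ in the sense of the preceding definition, and then invokes the preceding lemma to get the simplicial homotopy, exactly as you do. Your explicit verification of the identities and your treatment of $\Dec^0X$ via the opposite-simplicial-object identity $(\Dec_0X)^o=\Dec^0(X^o)$ just fill in details the paper leaves as ``clearly'' and ``analogous''.
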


A prime example where simplicial objects with extra degeneracies appear 
is in the construction of simplicial comonadic resolutions.  Suppose that $L$ is a comonad 
on a category $\cC$, and $X$ is an object of $\cC$.  Then, as is well known, $L$ determines an 
augmented simplicial object $L_*X$ whose object of $n$-simplices is $L^nX$ and whose 
face and degeneracy maps are defined by 
\[
d_i = L^i\epsilon L^{n-i}, s_i = L^i\delta L^{n-i-1} 
\]
respectively, where $\epsilon \colon L\to 1$ denotes the counit and $\delta\colon L\to L^2$ denotes the 
comultiplication of the comonad.  Suppose that there exists a section $\sigma\colon X\to LX$ 
of the counit $\epsilon_X\colon LX\to X$.  Then $\sigma$ determines extra degeneracies 
$s_{n+1}\colon L^nX\to L^{n+1}X$ given by $s_{n+1} = L^n\sigma$ (see for example \cite{Weibel}).  It follows from the discussion 
above that there is a simplicial homotopy $h\colon L_*X\otimes \Delta[1]\to L_*X$ in $s\cC$ between 
$\sigma\epsilon$ and the identity on $L_*X$.

\section{The total d\'{e}calage and the total simplicial set functors}  

In this section we will recall some of the main properties of Illusie's total d\'{e}calage 
functor $\Dec$ \cite{Illusie} and its right adjoint, the Artin-Mazur total simplicial set functor \cite{AM}.  For more details 
the reader should refer to the excellent discussion of these functors and their 
properties in the papers \cite{CR,CR2}.  In this section we will 
mainly be interested in the case where $\cC = \Set$.  We begin therefore by 
explaining our notations and conventions for bisimplicial sets (which follows 
closely the presentation in \cite{JT1}).      

If $X\in \SS$ is a bisimplicial set then we will say that $X_{m,n} = X([m],[n])$ has horizontal degree $m$ and 
vertical degree $n$.  We write $\SS$ for the category of bisimplicial sets.  We say a {\em simplicial space} 
is a simplicial object in $\S$.  There are two ways in which we can regard 
a bisimplicial set $X$ as a simplicial space.  On the one hand, we can 
define $X_m$ to be the simplicial set with $n$-simplices 
$(X_m)_n = X_{m,n}$.  Thus we regard $X$ as a horizontal 
simplicial object with vertical simplicial sets.  
On the other hand we can define $X_n$ to be the simplicial set with 
$m$-simplices $(X_n)_m = X_{m,n}$.  Thus we regard $X$ as a vertical simplicial object 
with horizontal simplicial sets.  

Each of these two ways of viewing a bisimplicial set as a 
simplicial space leads to a simplicial enrichment of $\SS$, using the 
canonical simplicial enrichment of $s\S$ mentioned earlier.  
If we view bisimplicial sets as a horizontal simplicial objects in $\S$, then $\SS = s\S$ 
is equipped with the structure of a simplicial enriched category for which the tensor 
$X\otimes_1 K$, for $X$ a bisimplicial set and $K\in \S$, has vertical simplicial set 
of $m$-simplices given by (see~\eqref{simplicial structure}) 
\[
(X\otimes_1 K)_m = \coprod_{k\in K_m} X_m = X_m\times K_m, 
\]
so that the set of $(m,n)$-bisimplices of $X\otimes_1 K$ is $(X\otimes_1 K)_{m,n} = X_{m,n}\times K_m$.  
In other words, 
\[
X\otimes_1 K = X\times p_1^*K, 
\]
where $p_1\colon \Delta\times \Delta\to \Delta$ denotes projection onto 
the first factor.  The simplicial enrichment is then defined by the formula 
\[
\Hom_1(X,Y) = i_1^*(Y^X), 
\]
where $i_1\colon \Delta\to \Delta\times \Delta$ denotes the right 
adjoint to $p_1$, so that $i_1([n]) = ([n],[0])$.  

Similarly,   
if we view $X\in \SS$ as a vertical simplicial object, then the tensor 
$X\otimes_2 K$ is given by 
\[
X\otimes_2 K = X\times p_2^*K 
\]
where $p_2\colon \Delta\times \Delta\to \Delta$ denotes projection 
onto the second factor.  The simplicial enrichment is defined by the formula 
\[
\Hom_2(X,Y) = i_2^*(Y^X), 
\]
where $i_2\colon \Delta\to \Delta\times \Delta$ denotes 
right adjoint to $p_2$, defined by $i_2([n]) = ([0],[n])$.  
 
Following Joyal we will say that a bisimplicial set $X$ is 
{\em row augmented} if there is a map $X\to p_1^*K$ 
in $\SS$ for some simplicial set $K$, and we will say that 
$X$ is {\em column augmented} if there is a map $X\to p_2^*K$ in $\SS$ for 
some simplicial set $K$.

With these conventions understood we can describe Illusie's total d\'{e}calage functor \cite{Illusie}.  
The simplicial comonadic resolution of $\Dec_0$ gives rise to a functor $\Dec\colon \S\to s\S$ 
which sends a simplicial set $X$ to the simplicial space $\Dec X$ which in degree $n$ 
is the simplicial set 
\[
\Dec_n X = (\Dec_0)^n X.  
\]
Here we are thinking of $\Dec X$ as a vertical simplicial object in $\S$ with horizontal 
simplicial sets.  The set of $(m,n)$-bisimplices of the bisimplicial set $\Dec X$ 
is $(\Dec X)_{m,n} = X_{m+n+1}$.  
The horizontal and vertical face operators 
$d_i^h\colon (\Dec\, X)_{m+1,n}\to (\Dec\, X)_{m,n}$ and 
$d_i^v\colon (\Dec\, X)_{m,n+1}\to (\Dec\, X)_{m,n}$ are given by 
$d_i^h = d_i\colon X_{m+n+2}\to X_{m+n+1}$ and 
$d_i^v = d_{m+i+1}\colon X_{m+n+2}\to X_{m+n+1}$ respectively.  
There are similar formulas for the horizontal and vertical degeneracy operators.  
Note that if we regard the bisimplicial set 
$\Dec X$ as a horizontal simplicial object with vertical simplicial sets then 
$\Dec X$ is the simplicial comonadic resolution of $X$ by $\Dec^0$.  The following Lemma is 
straightforward.    

\begin{lemma} 
\label{lem:dec}
The functor $\Dec\colon \S\to \SS$ is given by restriction along the ordinal sum map 
$\sigma \colon \Delta\times \Delta\to \Delta$ so that      
\[ 
\Dec\, X([m],[n]) = X(\sigma([m], [n])) = X_{n+m+1} 
\] 
for $X\in \S$.  
\end{lemma}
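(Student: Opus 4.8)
The plan is to exhibit explicitly the restriction functor along $\sigma$ and to verify, on generating morphisms, that it coincides with the functor $\Dec$ constructed above. The ordinal sum restricts to a bifunctor $\sigma\colon \Delta\times\Delta\to\Delta$ with $\sigma([m],[n]) = [m+n+1]$, so precomposition with $\sigma^\op$ defines a functor $\sigma^*\colon \S = [\Delta^\op,\Set]\to [\Delta^\op\times\Delta^\op,\Set] = \SS$; for $X\in\S$ the bisimplicial set $\sigma^*X$ has $(m,n)$-bisimplices $(\sigma^*X)_{m,n} = X(\sigma([m],[n])) = X_{m+n+1}$, which already agrees with the value $(\Dec X)_{m,n} = X_{m+n+1}$ recorded above. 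It remains only to match the two functors on morphisms.

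Since $\Delta\times\Delta$ is generated by the cofaces $d^i$ and codegeneracies $s^i$ in each of the two variables, it is enough to compute the images of $(d^i,\id_{[n]})$, $(s^i,\id_{[n]})$, $(\id_{[m]},d^j)$ and $(\id_{[m]},s^j)$ and compare with the horizontal and vertical operators of $\Dec X$. Feeding these into the explicit formula for $\sigma(\a,\b)$ from Section~\ref{app:one}, one finds that $\sigma(d^i,\id_{[n]})$ and $\sigma(s^i,\id_{[n]})$ are again the coface $d^i$ and codegeneracy $s^i$ on $[m+n+1]$, so they induce the horizontal operators $d_i$ and $s_i$ on $\sigma^*X$ --- exactly the operators $d_i^h = d_i$ of $\Dec X$. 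In the vertical variable the asymmetry of $\sigma$ produces the expected shift: $\sigma(\id_{[m]},d^j)$ is the coface $d^{m+j+1}$ and $\sigma(\id_{[m]},s^j)$ the codegeneracy $s^{m+j+1}$, so they induce the operators $d_{m+j+1}$ and $s_{m+j+1}$, which are precisely $d_j^v = d_{m+j+1}$ and the corresponding vertical degeneracy of $\Dec X$ recorded above. This establishes $\sigma^* = \Dec$.

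A more conceptual route, which avoids handling individual morphisms, is to note that $\Dec_0$ is by definition restriction along $\sigma(-,[0])$, that its counit $\epsilon$ and comultiplication $\delta$ are the restrictions of the structure maps $[-1]\to[0]$ and $[1]\to[0]$ exhibiting $[0]$ as a monoid in $(\Delta_a,\sigma,[-1])$, and that the simplicial comonadic resolution of such a comonad is governed by the universal property of $(\Delta_a,\sigma,[-1])$ as the free monoidal category on a monoid (Proposition~5.1 of Chapter~VII of \cite{CWM}); from this the bisimplicial set $\Dec X$ is manifestly the restriction of $X$ along $\sigma$, horizontal in the first variable and vertical in the second. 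The only point demanding care in either argument is the bookkeeping of this vertical index shift --- one must make sure that the comonadic resolution of $\Dec_0$ in the vertical direction is restriction along $\sigma([m],-)$, with vertical operators acting on the block of indices $\{m+1,\dots,m+n+1\}$, rather than along an unshifted copy of $\Delta$ or along $\sigma(-,[m])$. Once the two variables are correctly identified in this way, the remaining verification is the routine computation indicated above.
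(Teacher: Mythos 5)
Your proof is correct: the paper states this lemma without proof (it is simply called ``straightforward''), and your verification on generating morphisms --- computing $\sigma(d^i,\id_{[n]})=d^i$, $\sigma(s^i,\id_{[n]})=s^i$, $\sigma(\id_{[m]},d^j)=d^{m+j+1}$, $\sigma(\id_{[m]},s^j)=s^{m+j+1}$, so that restriction along $\sigma$ has exactly the horizontal operators $d^h_i=d_i$ and shifted vertical operators $d^v_j=d_{m+j+1}$ of $\Dec X$ --- is precisely the routine check the paper leaves implicit. Your closing caution about which variable carries the shift (the columns being restriction along $\sigma([m],-)$, with vertical operators acting on the indices $m+1,\dots,m+n+1$) is indeed the only point where the bookkeeping could go wrong, and you have it right.
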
 
 
We see from this Lemma that in fact $\Dec X$ is the restriction of a bi-augmented simplicial object 
in the sense that the functor $\Dec\colon \Delta^\op\times \Delta^\op\to \cC$ extends canonically to a functor 
$(\Delta_a^\op\times \Delta^\op)\cup (\Delta^\op\times \Delta_a^\op)\to \cC$.  
Therefore $\Dec X$ is both row and column augmented.  The row augmentation $\epsilon_r\colon \Dec X\to p_1^*X$ 
is given by the map $d_\last\colon \Dec_0 X\to X$, while the column augmentation $\epsilon_c\colon \Dec X\to p_2^*X$ is 
given by the map $d_\first\colon \Dec^0X\to X$.

Suppose that $X$ is a simplicial set, and regard $\Dec X$ as a (vertical) simplicial space 
whose rows are the simplicial sets $\Dec_n X$ for $n\geq 0$.  Then the functor 
$\disc\colon \S\to \SS$ which sends a simplicial set $K$ to the constant simplicial 
space whose rows are $K$, has a left adjoint $\pi_0\colon \SS\to \S$.  Note that the 
functor $\disc$ is nothing other than the functor $p_1^*\colon \S\to \SS$.  It is easy 
to compute the value of $\pi_0\Dec X$ as the next lemma shows.  
\begin{lemma}
\label{pi_0 of Dec} 
For any simplicial set $X$, we have $\pi_0\Dec X = X$.  
\end{lemma}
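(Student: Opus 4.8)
The plan is to compute $\pi_0\Dec X$ directly, by unwinding the adjunction $\pi_0\dashv\disc$. Since $\disc = p_1^*$ sends a simplicial set to the constant vertical simplicial space on it, its left adjoint $\pi_0$ computes, in each horizontal degree $m$, the colimit of $[n]\mapsto(\Dec X)_{m,n}$ over $\Delta^{\op}$, which is the coequalizer in $\Set$ of the two vertical face maps out of vertical degree $1$. By Lemma~\ref{lem:dec} we have $(\Dec X)_{m,1} = X_{m+2}$ and $(\Dec X)_{m,0} = X_{m+1}$, and the formula for the face operators of $\Dec X$ recorded above gives, in vertical degree $0$, that $d_0^v = d_{m+1}$ and $d_1^v = d_{m+2}$ as maps $X_{m+2}\to X_{m+1}$. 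Hence
\[
(\pi_0\Dec X)_m = \coeq\bigl(d_{m+1}, d_{m+2}\colon X_{m+2}\rightrightarrows X_{m+1}\bigr).
\]

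Next I would show that $d_{m+1}\colon X_{m+1}\to X_m$ exhibits $X_m$ as this coequalizer. It coequalizes the pair since $d_{m+1}d_{m+1} = d_{m+1}d_{m+2}$, by the simplicial identity $d_id_j = d_{j-1}d_i$ for $i<j$. For the universal property, suppose $f\colon X_{m+1}\to Z$ satisfies $fd_{m+1} = fd_{m+2}$. Because $d_{m+1}s_m = \id$ (the identity $d_{j+1}s_j = \id$), any $g$ with $gd_{m+1} = f$ must equal $fs_m$, so the factorization is unique; and conversely $g = fs_m$ works, since $fs_md_{m+1} = fd_{m+2}s_m = fd_{m+1}s_m = f$, using first $d_{m+2}s_m = s_md_{m+1}$ (the case $d_is_j = s_jd_{i-1}$ with $i>j+1$) and then the hypothesis on $f$. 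Thus $d_{m+1}$ is the coequalizer. Finally, $d_{m+1}$ is exactly the map induced in horizontal degree $m$ by the row augmentation $\epsilon_r = d_\last\colon\Dec_0X\to X$, so the resulting bijections $(\pi_0\Dec X)_m\to X_m$ are natural in $m$ and assemble into an isomorphism $\pi_0\Dec X\cong X$.

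The argument is short: it is just the unwinding of the two adjunctions together with a handful of simplicial identities, so I do not expect a real obstacle. The one place requiring care is the bookkeeping that identifies the vertical faces $d_0^v, d_1^v$ of $\Dec X$ out of vertical degree $1$ with $d_{m+1}, d_{m+2}$; this is the formula for the vertical face operators of $\Dec X$ specialized to vertical degree $0$, and can be re-derived from $\Dec X = X\circ\sigma$ using that $\sigma(\id_{[m]},\delta^i) = \delta^{m+i+1}$.
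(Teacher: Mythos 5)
Your proof is correct and follows essentially the same route as the paper: both unwind the adjunction $\pi_0\dashv\disc$ to compute $\pi_0\Dec X$ column-wise as a coequalizer, and identify the result with $X_m$ via the last face map coming from the (contractible) d\'ecalage structure. Your version simply makes explicit, via the split-coequalizer computation with $s_m$ and the row augmentation $\epsilon_r$, the details the paper delegates to its contractibility lemma and to the reader's check of compatibility with faces and degeneracies.
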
 
\begin{proof} 
The value of the functor $\pi_0\colon \SS\to \S$ on a bisimplicial set $Y$ is given by 
the coequalizer 
\[
\pi_0Y = \coeq(\xymatrix{Y_1 \ar@<0.5ex>[r]^-{d_0} \ar@<-0.5ex>[r]_-{d_1} & Y_0 } ),  
\]
where we have written $Y_n$ for the $n$-th row of $Y$.  
Since colimits in $\S$ are computed pointwise we see that the set of $n$-simplices 
$\pi_0(Y)_n$ is the set of path components of the $n$-th column of $Y$.  Therefore 
the set of $n$-simplices of $\pi_0\Dec X$ is the set of path components of $\Dec^nX$, 
which we have seen is equal to $X_n$.  One can further check that this isomorphism 
is compatible with face and degeneracy maps so that we get the identification $\pi_0 \Dec X = X$.  
\end{proof} 

Thus when $X = \Delta[n]$ we have an isomorphism $\pi_0\Dec \Delta[n] = \Delta[n]$.  
In fact, we will show that more is true: we will see that the row and column augmentation 
maps $\Dec\Delta[n]\to p_1^*\Delta[n]$ and $\Dec \Delta[n]\to p_2^*\Delta[n]$ are simplicial 
homotopy equivalences.        
To see this we will need the following result.  

\begin{lemma} 
For any $n\geq 0$ there are sections 
of the maps $d_\last\colon \Dec_0\Delta[n]\to \Delta[n]$ and $d_\first\colon \Dec^0\Delta[n]\to \Delta[n]$.  
\end{lemma}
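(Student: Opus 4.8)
The plan is to write the two sections down explicitly; there is no conceptual content, and the only thing that requires care is keeping the bookkeeping with the ordinal sum $\sigma$ straight. Once the relevant identifications are pinned down, both sections are forced and the verifications are a line each.

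Recall first, from Lemma~\ref{lem:dec} and Section~\ref{app:one}, the concrete form of $\Dec_0\Delta[n]$: its set of $m$-simplices is $(\Dec_0\Delta[n])_m=\Delta[n]_{m+1}=\Hom_\Delta([m+1],[n])$; a simplicial operator $\gamma\colon[m']\to[m]$ acts on it by precomposition with $\sigma(\gamma,\id_{[0]})\colon[m'+1]\to[m+1]$, which restricts to $\gamma$ on $\{0,\dots,m'\}$ and sends the top vertex $m'+1$ to the top vertex $m+1$; and in degree $m$ the counit $d_\last$ is the operation of restricting a map $\alpha\colon[m+1]\to[n]$ along the inclusion $[m]\hookrightarrow[m+1]$ of the initial segment $\{0,\dots,m\}$. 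With this in hand I would define $s\colon\Delta[n]\to\Dec_0\Delta[n]$, via the Yoneda lemma, to be the simplicial map classified by the $n$-simplex $[n+1]\to[n]$ which is the identity on $\{0,\dots,n\}$ and sends $n+1$ to $n$ (the top codegeneracy). Unwinding Yoneda, $s$ sends $\beta\colon[m]\to[n]$ to the map $[m+1]\to[n]$ extending $\beta$ by $m+1\mapsto n$; this is order-preserving because $\beta(m)\le n$, and it is natural in $[m]$ for free, precisely because the operator action just recalled fixes the top vertex. Finally $d_\last\circ s=\id_{\Delta[n]}$ holds because, under Yoneda, it amounts to the simplicial identity that the top codegeneracy $[n+1]\to[n]$ followed by the initial-segment inclusion $[n]\hookrightarrow[n+1]$ is $\id_{[n]}$; alternatively one checks this degreewise straight from the formula.

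For $d_\first\colon\Dec^0\Delta[n]\to\Delta[n]$ I would argue by the evident left--right mirror: again $(\Dec^0\Delta[n])_m=\Hom_\Delta([m+1],[n])$, the operator action now fixes the bottom vertex $0$, the counit $d_\first$ restricts $\alpha\colon[m+1]\to[n]$ along the final-segment inclusion $[m]\hookrightarrow[m+1]$, $i\mapsto i+1$, and the section $s'$ sends $\beta\colon[m]\to[n]$ to the map $[m+1]\to[n]$ with $0\mapsto0$ and $i\mapsto\beta(i-1)$ for $i\ge1$ --- equivalently the map classified by the first codegeneracy $[n+1]\to[n]$ --- with $d_\first\circ s'=\id$ reducing to the simplicial identity that this codegeneracy followed by the final-segment inclusion is $\id_{[n]}$. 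Alternatively, one can avoid this second computation by invoking the involution $(-)^o$ of Section~\ref{app:one}: since $(\Dec_0X)^o=\Dec^0(X^o)$ this involution carries $d_\last$ to $d_\first$, and it carries $\Delta[n]$ to a simplicial set isomorphic to $\Delta[n]$, so it transports $s$ to a section of $d_\first$. I do not expect any genuine obstacle; the whole substance is getting the conventions right --- which cofaces $d_\last$ and $d_\first$ are, and which vertex the operators on $\Dec_0$ and $\Dec^0$ leave fixed --- after which the sections write themselves.
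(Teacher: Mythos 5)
Your proof is correct and is essentially the paper's own argument: your section classified by the top codegeneracy is exactly the paper's $\sigma_r(x)=s^{n+1}\sigma(x,[0])$, i.e.\ extend $x\colon[m]\to[n]$ by sending the new top vertex to $n$, and your mirror-image (or opposite-involution) treatment of $d_\first$ matches the paper's ``completely analogous'' remark. The only difference is packaging --- you invoke Yoneda and a cosimplicial identity where the paper writes the formula and verifies $d_\last\sigma_r=\id$ degreewise --- which is not a substantive divergence.
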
 

\begin{proof}
Consider the map $\sigma_r\colon \Delta[n]\to \Dec_0\Delta[n]$ given in degree 
$m$ by 
\[
\sigma_r(x) = s^{n+1}\sigma(x,[0]) 
\]
for $x\colon [m]\to [n]$.  Clearly this is natural in $x$.  It is also a section of $d_\last$ by the 
following calculation.  Since  
$d_\last \sigma_r(x) = d_{m+1} s^{n+1}\sigma(x,[0])  =   s^{n+1}\sigma(x,[0])d^{m+1}$, then for any $i\in [m]$ we have 
\begin{align*} 
s^{n+1}\sigma(x,[0])d^{m+1}(i) & = s^{n+1}\sigma(x,[0])(i) \\ 
& = s^{n+1}(x(i)) \\ 
& = x(i),  
\end{align*}
so that $d_\last\sigma_r = \sigma_r$.  In a completely analogous way one can define 
a section of $d_\first$.     
\end{proof} 

The section $\sigma_r\colon \Delta[n]\to \Dec_0\Delta[n]$ is a section of the counit 
$d_\last\colon \Dec_0\Delta[n]\to \Delta[n]$ and hence defines an extra degeneracy 
of the simplicial comonadic resolution $\Dec\Delta[n]$ of $\Delta[n]$.  As discussed 
in Section~\ref{sec:contractible} above, this 
means that the map $\sigma_r\colon \Delta[n]\to \Dec\Delta[n]$ exhibits $\epsilon_r\colon \Dec\Delta[n]
\to p_1^*\Delta[n]$ as a deformation retraction: thus $\epsilon_r\sigma_r$ is the identity on $p_1^*\Delta[n]$ 
and there is a simplicial homotopy between $\sigma_r\epsilon_r$ and the identity on 
$\Dec\Delta[n]$.  Here we are viewing $\Dec\Delta[n]$ as a vertical simplicial object 
with horizontal simplicial sets and thus the simplicial homotopy is a map 
$h\colon \Dec\Delta[n]\otimes_2\Delta[1]\to \Dec\Delta[n]$.  Completely 
analogous statements apply for the section $\sigma_c$.  We summarize this discussion in the 
following lemma.  

\begin{lemma} 
\label{lem:simplicial homotopy}
There are maps $\sigma_r\colon p_1^*\Delta[n]\to \Dec\, \Delta[n]$ 
and $\sigma_c\colon p_2^*\Delta[n]\to \Dec\, \Delta[n]$ in 
$\SS$ such that the following are true: 
\begin{enumerate}
\item $\sigma_r$ is a section of $\epsilon _r\colon \Dec\, \Delta[n]\to p_1^*\Delta[n]$ 
and $\sigma_c$ is a section of $\epsilon_c\colon \Dec\, \Delta[n]\to p_2^*\Delta[n]$, 

\item there is a simplicial homotopy $h\colon \Dec\, \Delta[n]\otimes_2 
\Delta[1]\to \Dec\, \Delta[n]$ from $\sigma_r\epsilon_r$ to the identity, 

\item there is a simplicial homotopy $k\colon \Dec\, \Delta[n]\otimes_1 
\Delta[1]\to \Dec\, \Delta[n]$ from $\sigma_c\epsilon_c$ to the identity.  
\end{enumerate}
\end{lemma}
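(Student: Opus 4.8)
The plan is to deduce all three statements directly from the general machinery of Section~\ref{sec:contractible}, applied to the simplicial comonadic resolution $\Dec\Delta[n]$, together with the sections constructed in the preceding Lemma. Recall from the discussion surrounding Lemma~\ref{lem:dec} that when $\Dec\Delta[n]$ is regarded as a vertical simplicial object in $\S$ with horizontal simplicial sets it is precisely the comonadic resolution of $\Delta[n]$ by the comonad $\Dec_0$, whose counit is $d_\last$ and whose row augmentation $\epsilon_r\colon\Dec\Delta[n]\to p_1^*\Delta[n]$ is the map $d_\last\colon\Dec_0\Delta[n]\to\Delta[n]$; dually, regarded as a horizontal simplicial object it is the comonadic resolution of $\Delta[n]$ by $\Dec^0$, with column augmentation $\epsilon_c$ given by $d_\first$.

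For the assertions about $\sigma_r$ I would begin from the map $\sigma_r\colon\Delta[n]\to\Dec_0\Delta[n]$ of the preceding Lemma, which is a section of the counit $d_\last$. Feeding this section into the construction recalled at the end of Section~\ref{app:one} produces extra degeneracies $s_{m+1}=(\Dec_0)^m\sigma_r\colon(\Dec\Delta[n])_m\to(\Dec\Delta[n])_{m+1}$ on the resolution $\Dec\Delta[n]$; by that discussion they satisfy the extra-degeneracy identities, so the contraction lemma of Section~\ref{sec:contractible} supplies a simplicial homotopy $h\colon\Dec\Delta[n]\otimes_2\Delta[1]\to\Dec\Delta[n]$ from $\sigma_r\epsilon_r$ to the identity, where $\sigma_r$ now also denotes the induced map $p_1^*\Delta[n]\to\Dec\Delta[n]$. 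The identity $\epsilon_r\sigma_r=\id$ on $p_1^*\Delta[n]$ is immediate from the section property and the definition of the comonadic resolution. This establishes the $\sigma_r$ half of (1) and all of (2).

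Statement (3), and the $\sigma_c$ half of (1), follow by the dual argument: one repeats everything with $\Dec_0$ replaced by $\Dec^0$, using the section $\sigma_c$ of $d_\first$ furnished by the preceding Lemma, the horizontal enrichment $\otimes_1$ in place of $\otimes_2$, and the column augmentation $\epsilon_c$ in place of $\epsilon_r$. I expect no serious obstacle here; the only thing needing care is the bookkeeping of the two simplicial directions --- matching the vertical direction of $\Dec\Delta[n]$ with $\Dec_0$ and $\otimes_2$, and the horizontal direction with $\Dec^0$ and $\otimes_1$ --- together with the routine check that $\sigma_r$, $\sigma_c$, $h$, $k$, and the augmentations are genuinely morphisms of bisimplicial sets, which is immediate from the explicit formulas for $\Dec$ recorded in Lemma~\ref{lem:dec} and from the descriptions of $\otimes_1$ and $\otimes_2$ given earlier.
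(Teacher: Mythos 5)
Your argument is correct and is essentially the paper's own proof: the lemma there is just a summary of the preceding discussion, which likewise takes the sections $\sigma_r$, $\sigma_c$ of $d_\last$, $d_\first$ from the previous lemma, regards $\Dec\,\Delta[n]$ as the comonadic resolution by $\Dec_0$ (vertically, giving $\epsilon_r$ and the $\otimes_2$-homotopy $h$) and by $\Dec^0$ (horizontally, giving $\epsilon_c$ and the $\otimes_1$-homotopy $k$), and invokes the extra-degeneracy/contraction machinery of Section~\ref{sec:contractible}. Your matching of the two simplicial directions with $\otimes_2$ and $\otimes_1$ agrees with the paper's conventions, so nothing further is needed.
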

  
From the description of $\Dec$ in Lemma~\ref{lem:dec} above it is clear that $\Dec$ has both a left and right adjoint.  
The left adjoint of $\Dec$ is related to the notion of the join of simplicial sets.  
The right adjoint to $\Dec$ is denoted 
$T\colon \SS\to \S$, it was introduced in \cite{AM} where it was called the  
{\em total simplicial set functor}.  It is also known as the {\em Artin-Mazur codiagonal}.     
It has the following explicit description: if 
$X$ is a bisimplicial set then the set $(TX)_n$ of 
$n$-simplices of the simplicial set $TX$   
is given by the equalizer of the diagram
\begin{equation}
\label{equalizer2} 
 (T X)_n\to  \prod^n_{i=0} X_{i,n-i}\rightrightarrows \prod^{n-1}_{i=0} X_{i,n-i-1} 
\end{equation} 
where the components of the two maps are defined by the composites 
\[ 
\prod^n_{i=0}X_{i,n-i} \stackrel{p_i}{\to} X_{i,n-i}\stackrel{d_0^v}{\to} X_{i,n-i-1} 
\] 
and 
\[ 
\prod^n_{i=0} X_{i,n-i} \stackrel{p_{i+1}}{\to} X_{i+1,n-i-1} \stackrel{d_{i+1}^h}{\to} X_{i,n-i-1}. 
\] 
The face maps $d_i\colon (T X)_n\to (T X)_{n-1}$ are given by 
\[ 
d_i = (d_i^v p_0,d_{i-1}^v p_1,\ldots, d_1^v p_{i-1},d_i^hp_{i+1},d_i^h p_{i+2},\ldots, d_i^h p_n) 
\] 
while the degeneracy maps $s_i\colon (T X)_n \to (T X)_{n+1}$ are given by 
\[ 
s_i = (s_i^vp_0,s_{i-1}^vp_1,\ldots, s_0^vp_i,s_i^hp_{i+1},\ldots, s_i^h p_n). 
\] 
The unit map $\eta\colon X\to T\Dec\, X$ of the adjunction $\Dec\dashv T$ is given by the map 
\begin{equation} 
x\mapsto (s_0(x),s_1(x),\ldots, s_n(x))  \label{unit map}
\end{equation}
in degree $n$ (see \cite{CR}).  
In general it is rather difficult to give a simple description of the simplicial set $TX$ for an arbitrary 
bisimplicial set $X$.  When $X$ is constant however, we have the following well-known result.  
\begin{lemma}
\label{calculation for constant object} 
Let $X$ be a simplicial set.  Then there are isomorphisms $Tp_1^*X = Tp_2^*X = X$, natural in $X$. 
\end{lemma}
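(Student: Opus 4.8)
The plan is to compute the equalizer in \eqref{equalizer2} directly in the case where the bisimplicial set is constant in one direction. Consider first $Y = p_1^*X$, so that $Y_{i,j} = X_i$ with the vertical face and degeneracy operators being identities and the horizontal ones being those of $X$. Then an $n$-simplex of $TY$ is a tuple $(x_0,\ldots,x_n)$ with $x_i\in Y_{i,n-i} = X_i$. The first map in \eqref{equalizer2} has $i$-th component $d_0^v p_i$, which on $p_1^*X$ is just the projection $x_i\mapsto x_i$ viewed in $X_i$ (since $d_0^v$ is the identity for $p_1^*X$), landing in the $i$-th factor $X_i = Y_{i,n-i-1}$ of $\prod_{i=0}^{n-1} Y_{i,n-i-1}$. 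The second map has $i$-th component $d_{i+1}^h p_{i+1}$, which sends $(x_0,\ldots,x_n)$ to $d_{i+1}^h(x_{i+1}) = d_{i+1}(x_{i+1})\in X_i$. So the equalizer condition becomes $x_i = d_{i+1}(x_{i+1})$ for $i = 0,\ldots,n-1$.

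The key observation is then that this system of equations has a unique solution once $x_n$ is specified: given $x_n\in X_n$, the remaining coordinates are forced, $x_{n-1} = d_n(x_n)$, $x_{n-2} = d_{n-1}d_n(x_n)$, and in general $x_i = d_{i+1}d_{i+2}\cdots d_n(x_n)$. Conversely any $x_n\in X_n$ determines such a tuple. This gives a bijection $(TY)_n \cong X_n$, natural in $n$ as a set. I would then check that under this bijection the face map $d_i\colon (TY)_n\to (TY)_{n-1}$ described in the excerpt corresponds to $d_i\colon X_n\to X_{n-1}$, and similarly for degeneracies; since the last coordinate of $d_i(x_0,\ldots,x_n)$ is $d_i^h p_n(x_0,\ldots,x_n) = d_i(x_n)$ (using $i\le n-1 < n$), and the bijection is "read off the last coordinate", this is essentially immediate, though one should also verify the remaining coordinates are consistent with that of $d_i x_n$ — which follows from the simplicial identities $d_j d_k = d_{k-1} d_j$ for $j<k$. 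So $Tp_1^*X \cong X$ naturally.

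For $Tp_2^*X$ one argues symmetrically: now $Z = p_2^*X$ has $Z_{i,j} = X_j$, the horizontal operators are identities and the vertical ones come from $X$. An $n$-simplex is $(z_0,\ldots,z_n)$ with $z_i\in Z_{i,n-i} = X_{n-i}$; the first map's $i$-th component $d_0^v p_i$ sends this to $d_0(z_i)\in X_{n-i-1}$, while the second map's $i$-th component $d_{i+1}^h p_{i+1}$ is the identity on $z_{i+1}\in X_{n-i-1}$. So the condition is $z_{i+1} = d_0(z_i)$, which is solved uniquely by specifying $z_0\in X_n$ and setting $z_i = d_0^i(z_0)$. This gives $(Tp_2^*X)_n\cong X_n$ by reading off the first coordinate, and again compatibility with faces and degeneracies follows from the simplicial identities. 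Combining, $Tp_1^*X = Tp_2^*X = X$ naturally in $X$.

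I do not expect a serious obstacle here; the statement is elementary. The one point requiring a little care — and the place where a sign or index error is most likely to creep in — is matching the explicit face-map formula for $T$ given in the excerpt with the face maps of $X$ under the "last coordinate" (resp.\ "first coordinate") identification, since one must confirm that all the coordinates $d_i^v p_j$ and $d_i^h p_j$ of $d_i(x_0,\ldots,x_n)$ are the ones forced by the value $d_i(x_n)$ (resp.\ $d_i(z_0)$) of the distinguished coordinate. This is a routine application of the simplicial identities but is the only step that is not completely automatic. An alternative, slicker route would be to invoke the adjunction $\Dec\dashv T$ together with Lemma~\ref{pi_0 of Dec}: since $\Dec$ is restriction along $\sigma$ and $p_1^*, p_2^*$ are restriction along the two projections, one can identify the right adjoints abstractly, but the direct computation above seems cleaner and self-contained.
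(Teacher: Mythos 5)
Your proposal is correct, but it proceeds by a genuinely different route from the paper. The paper proves this lemma in two lines by pure adjointness: since $\pi_0\dashv p_1^*$ and $\Dec\dashv T$, the composite $Tp_1^*$ is right adjoint to $\pi_0\Dec$, and Lemma~\ref{pi_0 of Dec} says $\pi_0\Dec$ is the identity on $\S$, so its right adjoint $Tp_1^*$ is naturally isomorphic to the identity; the statement for $Tp_2^*$ is obtained analogously. This is precisely the ``alternative, slicker route'' you mention and set aside at the end. What your direct computation of the equalizer \eqref{equalizer2} buys is an explicit description of the isomorphisms: an $n$-simplex of $Tp_1^*X$ is determined by its last coordinate $x_n$, the others being forced to be $x_i=d_{i+1}d_{i+2}\cdots d_n(x_n)$, and dually an $n$-simplex of $Tp_2^*X$ is determined by its first coordinate $z_0$ via $z_i=d_0^i(z_0)$; this is self-contained and independent of Lemma~\ref{pi_0 of Dec}, and the explicit formulas can be useful elsewhere. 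The cost is exactly the index bookkeeping you flag: note for instance that for the top face $d_n$ on $Tp_1^*X$ the last coordinate of $d_n(x_0,\ldots,x_n)$ is $d_1^vp_{n-1}=x_{n-1}=d_n(x_n)$ rather than a horizontal face, so the case $i=n$ needs its own (easy) check in addition to the cases $i\le n-1$ you treat, and similarly the degeneracy and remaining-coordinate checks should be written out via the simplicial identities; with those routine verifications included, your argument is complete and the naturality in $X$ is clear since both the bijection and its inverse are built from face maps and projections.
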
 
\begin{proof} 
Observe that the functor $Tp_1^*$ is right adjoint to the functor $\pi_0\Dec$.  Lemma~\ref{pi_0 of Dec} 
implies that the functor $\pi_0\Dec$ is the identity on $\S$, from which it follows that there is an isomorphism $Tp_1^*X = X$, natural 
in $X$.  The other statement is proven in an analogous fashion.  
\end{proof}

\section{The generalized Eilenberg-Zilber theorem for simplicial sets} 
\label{CR sec}

Our goal in this section is to present an elementary proof of Theorem~\ref{CR thm}.  
Recall that this theorem states that there is a weak equivalence 
\begin{equation} 
\label{CRmap}
dX\to T X  
\end{equation}
of simplicial sets, natural in $X$.  
As mentioned earlier, the proof of Theorem~\ref{CR thm} in \cite{CR} is rather lengthy, 
and so it is of interest to have a 
simpler approach.  
We will describe here another proof, which we think is fairly elementary (as mentioned 
in the Introduction, the forthcoming book \cite{JT2} of Joyal and Tierney contains 
another proof, which proceeds along different lines).       
  
We begin by describing the map~\eqref{CRmap}.  This map is obtained from the 
map of cosimplicial bisimplicial sets
\begin{equation} 
\label{dX -> TX} 
\Dec\Delta\to (p_1^*\Delta\times p_2^*\Delta)\delta 
\end{equation}
by applying the functor $\SS(-,X)\colon c\SS\to \S$.  Here $(p_1^*\Delta\times p_2^*\Delta)\delta$ 
is the cosimplicial bisimplicial set which in degree $n$ is $p_1^*\Delta[n]\times p_2^*\Delta[n]$.  
The map~\eqref{dX -> TX} in degree $n$ is the canonical map 
induced by the row and column augmentations $\epsilon_r\colon \Dec\Delta[n]\to p_1^*\Delta[n]$ 
and $\epsilon_c\colon \Dec\Delta[n]\to p_2^*\Delta[n]$ respectively.  Note that it is possible to describe 
the map $dX\to TX$ much more explicitly at the level of simplices (see \cite{CR}) but we will not need this.

The proof of Theorem~\ref{CR thm} that we shall give essentially boils down 
to the well known fact that the diagonal functor 
$d\colon \SS\to \S$ sends level-wise weak equivalences of bisimplicial sets 
to weak equivalences of simplicial sets. 
In other words, if $f\colon X\to Y$ is a map in $\SS$ such 
that the map $f_n\colon X_n\to Y_n$ on $n$-th rows is a weak equivalence 
for all $n\geq 0$, then $df\colon dX\to dY$ is also a weak 
equivalence.  Alternatively, if the map $f_n\colon X_n\to Y_n$ on the $n$-th columns 
is a weak equivalence for all $n\geq 0$, then $df\colon dX\to dY$ is a weak equivalence.  
  
Recall that $d$ has a right adjoint $d_*\colon \S\to \SS$ 
(see for instance \cite{GJ} page 222) defined by the formula 
\begin{equation}
\label{eq:d*}
(d_*X)_{m,n} = \S(\Delta[m]\times \Delta[n],X).  
\end{equation}
Using the fact that the diagonal $d$ sends level-wise weak equivalences 
to weak equivalences one can prove (see for instance 
\cite{Moerdijk}) that the counit $\epsilon\colon dd_*K\to K$ of this adjunction is a 
weak equivalence for any simplicial set $K$, and so in particular $dd_*T X\to T X$ is a 
weak equivalence for any bisimplicial set $X$.  
Therefore, since we can factor~\eqref{CRmap} as 
\[ 
dX \to dd_*T X\to T X,  
\]
we see that to prove Theorem~\ref{CR thm} it suffices to prove the following proposition.  
\begin{proposition}
\label{level wise we}
The map $dX\to dd_*T X$ is a weak equivalence for any bisimplicial set $X$.
\end{proposition}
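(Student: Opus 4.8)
The plan is to realize the map $dX\to dd_*TX$ as the value of the functor $\SS(-,X)\colon c\SS\to\S$ on a concrete map of cosimplicial bisimplicial sets, and then to prove it is a weak equivalence by a cellular induction on $X$ whose base case is the representable bisimplicial sets, where everything is forced by the contractibility of the d\'ecalage. First I would make the map explicit. Since $\Dec$ is given by restriction along ordinal sum it preserves products and diagonals, so $\Dec(\Delta[n]\times\Delta[n])=\Dec\Delta[n]\times\Dec\Delta[n]$; combined with the adjunction $\Dec\dashv T$ and formula~\eqref{eq:d*} this gives $(dd_*TX)_n=\S(\Delta[n]\times\Delta[n],TX)=\SS(\Dec\Delta[n]\times\Dec\Delta[n],X)$, while $(dX)_n=X_{n,n}=\SS(p_1^*\Delta[n]\times p_2^*\Delta[n],X)$. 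Tracing the factorization $dX\to dd_*TX\to TX$ of the comparison map through these identifications, one sees that $dX\to dd_*TX$ is $\SS(-,X)$ applied to the map of cosimplicial bisimplicial sets $\epsilon_r\times\epsilon_c\colon\Dec\Delta[\bullet]\times\Dec\Delta[\bullet]\to p_1^*\Delta[\bullet]\times p_2^*\Delta[\bullet]$ built from the row and column augmentations, and that the counit $dd_*TX\to TX$ is $\SS(-,X)$ applied to the diagonal $\Dec\Delta[\bullet]\to\Dec\Delta[\bullet]\times\Dec\Delta[\bullet]$.

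Next I would factor $\epsilon_r\times\epsilon_c$ as $(\epsilon_r\times\id)\circ(\id\times\epsilon_c)$ through $p_1^*\Delta[\bullet]\times\Dec\Delta[\bullet]$; since passing to opposite simplicial objects interchanges $\Dec_0$ with $\Dec^0$, hence $\epsilon_r$ with $\epsilon_c$ and $p_1^*$ with $p_2^*$, the two resulting halves of $dX\to dd_*TX$ are formally dual and it suffices to treat one, say the map induced by $\id_{p_1^*\Delta[\bullet]}\times\epsilon_c$. By Lemma~\ref{lem:simplicial homotopy} the column augmentation $\epsilon_c\colon\Dec\Delta[n]\to p_2^*\Delta[n]$ is a simplicial homotopy equivalence for the $\otimes_1$--enrichment, with section $\sigma_c$ and contracting homotopy $k$, all natural in $n$; hence so is $\id_{p_1^*\Delta[n]}\times\epsilon_c$, and since $d$ carries both $\otimes_1$-- and $\otimes_2$--homotopies to ordinary simplicial homotopies, applying $d$ in each cosimplicial degree produces a simplicial homotopy equivalence, in particular a weak equivalence. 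Thus $\id\times\epsilon_c$ is a map of (Reedy cofibrant) cosimplicial bisimplicial sets which is a weak equivalence in each cosimplicial degree. It then remains to check that $\SS(-,X)$ sends such a map to a weak equivalence. I would verify this by induction on the cells of $X$: the class of bisimplicial sets $X$ for which the statement holds contains the representables $\Delta[p,q]=p_1^*\Delta[p]\times p_2^*\Delta[q]$ — there $T\Delta[p,q]=\Delta[p]\times\Delta[q]=d\Delta[p,q]$ and the comparison map is an isomorphism, by Lemma~\ref{calculation for constant object} together with the contractions of Lemmas~\ref{lem:def retract} and~\ref{lem:simplicial homotopy} — and is closed under coproducts, pushouts along monomorphisms, and filtered colimits along monomorphisms, operations which between them build every bisimplicial set; the inductive step is exactly where one uses the hypothesis that the diagonal sends level-wise weak equivalences of bisimplicial sets to weak equivalences.

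The main obstacle is precisely this last inductive step. The naive hope, that $dX\to dd_*TX$ is simply the diagonal of a level-wise weak equivalence of bisimplicial sets, fails: writing $dd_*TX=dZ$ with $Z_{p,q}=\SS(\Dec\Delta[p]\times\Dec\Delta[q],X)$, the comparison $X\to Z$ is in general \emph{not} a level-wise weak equivalence (one already sees this for $X=p_2^*\Delta[1]$). So one cannot invoke ``$d$ preserves level-wise weak equivalences'' in a single stroke; moreover $dd_*T$ does not commute with colimits, so the cellular induction must be set up carefully — running it along a skeletal filtration of $X$ in one simplicial direction and controlling $T$ on the resulting cells by hand through the explicit small model $T\Delta[p,q]=\Delta[p]\times\Delta[q]$. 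The argument is, in spirit, the one Moerdijk~\cite{Moerdijk} uses to show that the counit $dd_*K\to K$ is a weak equivalence, transported to the present setting; keeping the bookkeeping of the two bisimplicial directions straight is the delicate point.
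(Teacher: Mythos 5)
Your first half agrees with the paper: realizing $X\to d_*TX$ (and hence $dX\to dd_*TX$) as $\SS(-,X)$ applied to $\epsilon_r\times\epsilon_c\colon \Dec\Delta\times\Dec\Delta\to p_1^*\Delta\times p_2^*\Delta$, and factoring this through the mixed object $p_1^*\Delta\times\Dec\Delta$ (the paper uses $\Dec\Delta\otimes_2\Delta$, i.e.\ $\Dec\Delta[m]\times p_2^*\Delta[n]$) is exactly the right first move. The gap is in the second half, precisely at the step you defer. A small point first: the section $\sigma_c$ and the homotopy $k$ of Lemma~\ref{lem:simplicial homotopy} are \emph{not} natural in $n$ --- e.g.\ $\sigma_r(x)$ extends $x\colon[m]\to[n]$ by sending the new top vertex to $n$, so it commutes with $\alpha\colon[n]\to[n']$ only when $\alpha(n)=n'$ --- so your parenthetical ``all natural in $n$'' is false (harmless for the degreewise statement you extract, but it rules out any direct cosimplicial-homotopy argument). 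The decisive problem is the claim that ``it remains to check that $\SS(-,X)$ sends such a map to a weak equivalence.'' There is no general principle here to check: a degreewise weak equivalence of Reedy cofibrant cosimplicial objects need not be inverted by $\SS(-,X)$ --- already in $c\S$, the map from $\Delta[\bullet]$ to the constant cosimplicial object at $\Delta[0]$ is a degreewise weak equivalence of Reedy cofibrant objects, yet applying $\S(-,X)$ yields the inclusion of the constant simplicial set $X_0$ into $X$. So some special feature of this particular map must be used, and your proposed substitute --- cellular induction on $X$ --- does not go through as described: $T$, $d_*$ and $\SS(A^\bullet,-)$ preserve limits, not colimits, so a pushout of $X$ along a monomorphism is not carried to a (homotopy) pushout of the targets, and you offer no mechanism for the inductive step beyond acknowledging it is ``the delicate point.'' That step is the entire content of the proposition. (Moerdijk's proof that $dd_*K\to K$ is a weak equivalence is itself not a cell induction but a column-wise simplicial-homotopy-equivalence argument, so appealing to it ``in spirit'' does not supply the missing step.)

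The paper avoids any induction on $X$, and the way it does so is the idea you are missing. Keeping the two cosimplicial indices separate, it applies $\SS(-,X)\colon cc\SS\to\SS$ to the factorization and obtains maps of \emph{bisimplicial} sets $X\to\SS(\Dec\Delta\otimes_2\Delta,X)\to d_*TX$; it then shows the first is a column-wise and the second a row-wise simplicial homotopy equivalence, after which a single use of ``$d$ inverts level-wise weak equivalences'' in each direction finishes the proof. The mechanism for producing these homotopy equivalences without naturality in $n$ and without hypotheses on $X$ is the one to note: for a \emph{fixed} row index $n$, cartesian closedness of $\SS$ identifies the relevant map with $\epsilon_c^*\colon\SS(p_2^*\Delta[n],X^{\Dec\Delta})\to\SS(\Dec\Delta[n],X^{\Dec\Delta})$, and the path space of the target with $\SS(\Dec\Delta[n]\times\Dec\Delta[1],X^{\Dec\Delta})$; hence the required homotopy $(\sigma_c\epsilon_c)^*\simeq\id$ is induced by one bisimplicial map $\Dec\Delta[n]\times\Dec\Delta[1]\to\Dec\Delta[n]$, namely $k\circ(1\times\epsilon_r)$, with $n$ held fixed throughout --- the cosimplicial variable has been absorbed into the exponent $X^{\Dec\Delta}$. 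Your (correct) observation that $X\to d_*TX$ is not a level-wise weak equivalence is exactly the obstruction; the row/column splitting, not a cell-by-cell analysis of $X$, is what gets around it.
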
  
\begin{proof}   
First note that the underlying map $X\to d_*T X$ of bisimplicial sets
is induced by the following map in $cc\SS$: 
\begin{equation} 
\label{map inducing X -> d_*TX} 
\epsilon_r\times \epsilon_c\colon \Dec\,  \Delta\times \Dec\,  \Delta \rightarrow 
p_1^*\Delta \times p_2^*\Delta,
\end{equation}
where $\Dec\, \Delta$ denotes the cosimplicial 
bisimplicial set $[m]\mapsto \Dec\, \Delta[m]$.  This relies on~\eqref{eq:d*} and the fact that 
$\Dec$ preserves products, together with the observation that the 
map~\eqref{dX -> TX} factors as 
\[
\Dec\Delta \to 
\Dec(\Delta\times \Delta)\delta \to (p_1^*\Delta\times p_2^*\Delta)\delta  
\]
Here the map $\Delta\to (\Delta\times \Delta)\delta$ is the canonical map 
inducing the counit $dd_*\to 1$ of the adjunction $d\dashv d_*$.   
The map~\eqref{map inducing X -> d_*TX} in turn factors as 
\[ 
\Dec\,  \Delta\times \Dec\,  \Delta\xrightarrow{1\times \epsilon_c} 
\Dec\, \Delta\otimes_2 \Delta \xrightarrow{\epsilon_r\times 1} p_1^*\Delta\times p_2^*\Delta,  
\] 
where $\Dec\Delta\otimes_2 \Delta$ denotes the bicosimplicial bisimplicial 
set which in bidegree $(m,n)$ is $\Dec\Delta[m]\otimes_2 \Delta[n]$.  Applying the 
functor $\SS(-,X)\colon cc\SS\to \SS$ we get a pair of maps of bisimplicial sets   
\begin{equation}
\label{map one}
(1\times \epsilon_c)^*\colon \SS(\Dec\, \Delta\otimes_2 \Delta,X)
\to d_*TX 
\end{equation}
and 
\begin{equation}
\label{map two}
(\epsilon_r\times 1)^*\colon X
\to \SS(\Dec\, \Delta\otimes_2 \Delta,X).   
\end{equation}
We will prove that the first map is a row-wise weak equivalence and that the 
second map is a column-wise weak equivalence.  Since $d$ sends level-wise 
weak equivalences to weak equivalences, this is enough to prove that the map 
$dX\to dd_*TX$ is a weak equivalence.  
  
\begin{lemma} 
\label{we lemma one}
The map~\eqref{map one} is a row-wise weak equivalence.  
\end{lemma} 
\begin{proof}
The induced map on the $n$-th row induced by~\eqref{map one} is obtained by applying the functor 
$\SS(-,X)$ to the map of cosimplicial bisimplicial sets 
\[
\Dec\Delta\times \Dec\Delta[n]\to \Dec\Delta\times p_2^*\Delta[n].  
\]
Since $\SS$ is cartesian closed (it is a presheaf category), we see that the 
induced map on rows is given by the map of simplicial sets  
\begin{equation} 
\label{map on rows}
\epsilon_c^*\colon \SS(p_2^*\Delta[n],X^{\Dec\Delta})\to \SS(\Dec\Delta[n],X^{\Dec\Delta}),  
\end{equation}
where $X^{\Dec\Delta}$ denotes the simplicial object in $\SS$ whose 
bisimplicial set of $p$-simplices is given by the exponential  
\[
X^{\Dec \Delta[p]}
\]
in $\SS$.  Note that for any simplicial set $K$, there is a bijection  
\[
\S(K,\SS(\Dec\Delta[n],X^{\Dec\Delta})) = \SS(\Dec\Delta[n],X^{\Dec K}),  
\] 
which is natural in $K$.  Thus there is an isomorphism 
\[
\SS(\Dec\Delta[n], X^{\Dec\Delta})^{\Delta[1]} = 
\SS(\Dec\Delta[n]\times \Dec\Delta[1],X^{\Dec\Delta}), 
\]
where $\SS(\Dec\Delta[n], X^{\Dec\Delta})^{\Delta[1]}$ denotes the 
simplicial path space of $\SS(\Dec\Delta[n],X^{\Dec\Delta})$.  
We will use these remarks to prove that the map~\eqref{map on rows} is a simplicial 
homotopy equivalence.  

Note that the map $\sigma_c\colon p_2^*\Delta[n]\to \Dec\, \Delta[n]$ 
 induces a left inverse $\sigma_c^*$ of the map $\epsilon_c^*$.
Therefore, we need to show that there is a simplicial homotopy
\[
(\sigma_c\epsilon_c)^*\simeq \id.
\]
By the previous description of the 
simplicial path space of 
$\SS(\Dec\Delta[n],X^{\Dec\Delta})$, to 
find such a simplicial homotopy   
it suffices to find a map 
\begin{equation}
\label{1st simp homotopy} 
 \Dec\, \Delta[n]\times \Dec\, \Delta[1]\to  \Dec\, \Delta[n], 
\end{equation}
in $\SS$ making the obvious diagram commute.  
A map as in~\eqref{1st simp homotopy} is given by the following composite: 
\begin{equation}
\label{cosimplicial homotopy one}
\Dec\, \, \Delta[n]\times\Dec\, \, \Delta[1]\xrightarrow{1\times \epsilon_r} 
\Dec\, \Delta[n]\otimes_1 \Delta[1]\xrightarrow{k} \Dec\, \Delta[n], 
\end{equation}
where $k\colon \Dec\, \Delta[n]\otimes_1 \Delta[1]\to \Dec\, \Delta[n]$ is the 
simplicial homotopy between $\sigma_c\epsilon_c$ and $\id$ of Lemma~\ref{lem:simplicial homotopy}.  
It is easy to check that this map satisfies the required commutativity condition.   
\end{proof} 

\begin{lemma} 
The map~\eqref{map two} is a column-wise weak equivalence.  
\end{lemma} 

\begin{proof}
The map induced on the $m$-th column by~\eqref{map two} is obtained by applying the functor 
$\SS(-,X)$ to the map of cosimplicial bisimplicial sets 
\[
\epsilon_r\times 1\colon \Dec\Delta[m]\times p_2^*\Delta\to p_1^*\Delta[m]\times p_2^*\Delta.  
\]
Again, since $\SS$ is cartesian closed, we see that the induced map on columns 
is given by 
\[
\SS(p_1^*\Delta[m],X^{p_2^*\Delta})\to \SS(\Dec\Delta[m],X^{p_2^*\Delta}).  
\]
Our strategy again is to show that this map is a simplicial homotopy equivalence.  
Since $\sigma_r$ is a right inverse of $\epsilon_r$, the map 
$(\sigma_r\times 1)^*$ is a left inverse of $(\epsilon_r\times 1)^*$.
Again, a little calculation shows that the simplicial path space of $\SS(\Dec\Delta[m],X^{p_2^*\Delta})$ 
is given by 
\[
\SS(\Dec\Delta[m]\times p_2^*\Delta[1],X^{p_2^*\Delta}).  
\]
Therefore, to find a simplicial homotopy $(\sigma_r\epsilon_r\times 1)^*\simeq \id$, 
it suffices to find a bisimplicial map 
\[
\Dec\, \Delta[m]\otimes_2 \Delta[1]\to \Delta[m]. 
\]
making the obvious diagram commute.  
Such a map is given for example by the simplicial homotopy 
\[ 
h\colon \Dec\, \, \Delta[m]\otimes_2 \Delta[1]\to \Dec\, \, \Delta[m] 
\] 
between $\sigma_r\epsilon_r$ and $\id$ of Lemma~\ref{lem:simplicial homotopy}. 
This completes the proof of the lemma.   
\end{proof}  
The proof of Proposition~\ref{level wise we} is now complete, by the remark above.   
\end{proof} 

In analogy with the fact that the map $dC\to \Tot\, C$ of the generalized Eilenberg-Zilber Theorem
for a chain complex $C$ is a chain homotopy equivalence \cite{DP,GJ}, one may wonder whether 
the analogous map $dX\to TX$ of simplicial sets is a simplicial homotopy equivalence.  
In some cases this is known, for instance when $X$ is the degree-wise nerve 
$NG$ of a simplicial group $G$, as we will discuss in the next section.  We suspect that 
the map $dX\to TX$ is not a simplicial homotopy equivalence for arbitrary $X$, however it 
seems a little difficult to construct a counter-example to support this.  In this direction we can say 
the following however: there is no map $TX\to dX$ which is natural in $X$.  For 
such a map would be induced in degree $n$ by a map $\Delta[n,n]\to \Dec\, \Delta[n]$, 
natural in $[n]$, which by adjointness would in turn be induced by a map $\Delta[2n+1]\to \Delta[n]$, 
natural in $[n]$.  However it is not hard to see that no such map can exist.    
  
\section{Kan's simplicial loop group construction revisited}   
\label{simp loop group sec}

The classifying complex $\Wbar G$ of a simplicial group $G$ was 
introduced in \cite{EM} (see Section 17 of that paper).  We recall the definition.  

\begin{definition}[Eilenberg-Mac Lane \cite{EM}] 
\label{def:Wbar G}
Let $G$ be a simplicial group.  Then $\Wbar G$ is the simplicial set with a single vertex, 
and whose set of $n$-simplices, $n\geq 1$, is given by 
\[
(\Wbar G)_n = G_{n-1}\times G_{n-2}\times \cdots \times G_0.  
\]
The face and degeneracy maps of $\Wbar G$ are given by the following formulas:  
\[
d_i(g_{n-1},\ldots, g_0) = \begin{cases}
		(g_{n-2},\ldots, g_0) & \text{if}\ i=0, \\
		(d_i(g_{n-1}),\ldots, d_1(g_{n-i+1}),g_{n-i-1}d_0(g_{n-i}),g_{n-i-2},\ldots, g_0) & \text{if}\ 1\leq i\leq n
		\end{cases}
\]
and 
\[
s_i(g_{n-1},\ldots, g_0) = \begin{cases} 
			(1,g_{n-1},\ldots, g_0) & \text{if}\ i=0, \\ 
			(s_{i-1}(g_{n-1}),\ldots, s_0(g_{n-i}),1,g_{n-i-1},\ldots, g_0) & \text{if}\ 1\leq i\leq n.  
			\end{cases}
\]
\end{definition} 

The motivation for the above formula for $\Wbar G$ is perhaps not so clear.  
We will show that there is a very natural `explanation' for the above formula in terms 
of the d\'{e}calage functors.  For this, we first need some background 
on principal twisted cartesian products.   

Recall that a {\em principal twisted cartesian product} (PTCP) with structure group $G$ consists of a 
simplicial set $P$ (the total space) and a simplicial set $M$ (the base space) together 
with a map $\pi\colon P\to M$ and an action of $G$ on $P$  
which is principal in the sense that the diagram 
\[ 
\xymatrix{ 
P\times G \ar[d]_-{p_1}\ar[r] & P \ar[d]^-{\pi} \\ 
P \ar[r]_-{\pi} & M} 
\] 
is a pullback, where $p_1$ denotes projection onto the first factor, 
the top arrow is the action of $G$ on $P$, and $\pi$ denotes the projection to the base.  
Moreover, $\pi\colon P\to M$ is required to have a {\em pseudo-cross section} (on the left), 
i.e.\  a family of sections $\sigma_n$ of the maps 
$\pi_n\colon P_n\to M_n$ for all $n\geq 0$ such that 
$\sigma_{n+1}s_i = s_i\sigma_n$ for all $0\leq i\leq n$ and 
$d_i \sigma_n = \sigma_{n-1}d_i$ for all $0<i\leq n$.  

The simplicial set $\Wbar G$ is a classifying space for PTCPs with structure 
group $G$ in the sense that there is a universal PTCP $WG$ with base space 
$\Wbar G$ with the property that every PTCP $P$ on $M$ with structure group $G$ is induced 
by pullback from $WG\to \Wbar G$ along a map $M\to \Wbar G$, the classifying map 
of $P$.  

In \cite{Duskin} Duskin explained how this classical 
notion of pseudo-cross section has a convenient reformulation in terms of 
$\Dec^0$.  In this reformulation, $\sigma$ is required to 
be a section of the induced map $\Dec^0\pi\colon \Dec^0P\to \Dec^0 M$ 
in the category $a_c\S$ of contractible augmented 
simplicial sets and coherent maps (see Section~\ref{sec:contractible}).  

Since $G$ acts principally on $P$, there is a canonical map of bisimplicial sets 
\[
\cosk_0 P\to NG, 
\]
where $NG$ denotes the bisimplicial set which, when viewed 
as a (vertical) simplicial object in $\S$, has as its object of $n$-simplices the 
(horizontal) simplicial set $NG_n$, i.e.\ the nerve of the group $G_n$.  
Also here $\cosk_0P$ denotes the 
0-coskeleton (or \v{C}ech nerve) of $P$, viewed as an object in 
$\S/M$.  Therefore, $\cosk_0 P$ has as its object of $n$-simplices 
the (horizontal) simplicial set $\check{C}(P_n)$ which is the \v{C}ech 
nerve of the map $\pi_n\colon P_n\to M_n$.  
In degree $n$ the canonical map $\cosk_0 \to NG$ is just the 
canonical map $\check{C}(P_n)\to NG_n$ arising from the principal action of $G_n$ on $P_n$.  

One of the advantages of this reformulation of the notion of 
PTCP is that it allows for a very simple and conceptual 
description of the classifying map of $P$ (we find it hard to believe that 
this description was not known to Duskin).  
We have a commutative diagram 
\[ 
\xymatrix{ 
\Dec^0 P \ar[d] \ar[r] & P \ar[d] \\ 
\Dec^0 M \ar[r] & M. }
\] 
Composing the pseudo-cross section $s\colon \Dec^0 
M\to \Dec^0 P$ with the map $\Dec^0 P\to P$ gives rise to a 
map $\Dec^0 M\to P$ over $M$ which extends 
canonically to a simplicial map  
\[ 
\Dec\,  M\to \cosk_0 P
\] 
between simplicial objects in $\S/M$.  
Here $\Dec^0 M$ is thought of as the 
vertical simplicial set of 0-simplices of the bisimplicial set 
$\Dec\, M$.  We can compose this with the canonical map $\cosk_0 P\to NG$ to obtain a map 
$\Dec\,  M\to NG$.  The adjoint of the map $\Dec\, M\to NG$ is a map 
\[ 
M\to T NG 
\] 
which serves as a classifying map for $P$.  One can go further and show that 
there is a canonical PTCP with base space $TNG$ from which $P$ arises via 
pullback along the above map.  The next result shows that $TNG$ is {\em precisely} 
the classifying complex $\Wbar G$.        
\begin{lemma}[Duskin] 
\label{lem:Wbar G}
The classifying complex functor $\overline{W}$ factors as 
\[ 
\overline{W} = T N,  
\] 
so that $\overline{W}G = T NG$ for any simplicial group $G$.   
\end{lemma}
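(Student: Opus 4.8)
The plan is to verify the identification $\overline{W}G = TNG$ directly at the level of $n$-simplices, matching the equalizer description of $(TNG)_n$ from~\eqref{equalizer2} against the explicit product $(\overline{W}G)_n = G_{n-1}\times\cdots\times G_0$ of Definition~\ref{def:Wbar G}, and then checking that the face and degeneracy maps agree. First I would unwind~\eqref{equalizer2} in the case $X = NG$. Here $(NG)_{i,j} = (NG_i)_j = G_i^{\,j}$ (a $j$-fold product of copies of the group $G_i$), so an $n$-simplex of $TNG$ is a compatible family $(x_0,\dots,x_n)$ with $x_i \in G_i^{\,n-i}$. I would observe that the vertical face $d_0^v$ on $NG_i$ deletes the first coordinate of a tuple in $G_i^{\,n-i}$, while the horizontal face $d_{i+1}^h$ is induced by applying the group homomorphism $d_{i+1}\colon G_{i+1}\to G_i$ coordinatewise (together with the simplicial structure of $NG_{i+1}$). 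The equalizer condition then says that the ``tail'' of $x_i$ (after forgetting its first coordinate) equals the image of $x_{i+1}$ under the appropriate face homomorphism. Chasing this recursively, one sees that the whole family $(x_0,\dots,x_n)$ is determined by its ``leading'' coordinates, and extracting the first coordinate of each $x_i$ (for $i=0,\dots,n-1$; note $x_n\in G_n^{\,0}$ is trivial) gives a bijection
\[
(TNG)_n \;\xrightarrow{\ \cong\ }\; G_{n-1}\times G_{n-2}\times\cdots\times G_0 = (\overline{W}G)_n,
\]
and this is manifestly natural in $G$.

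Next I would check that this bijection is simplicial, i.e. that it carries the face and degeneracy maps of $TNG$ — read off from the formulas for $d_i$ and $s_i$ on $TX$ given just before Lemma~\ref{calculation for constant object} — to the formulas for $d_i$ and $s_i$ in Definition~\ref{def:Wbar G}. The degeneracy case is the cleaner of the two: inserting a degeneracy on the $NG_i$ factors produces an identity element $1 \in G_{n-i}$ in the expected slot, exactly as in the formula for $s_i(g_{n-1},\dots,g_0)$. For the faces, the $i=0$ case ($d_0$ deletes $g_0$) corresponds to the fact that $d_0$ on $TNG$ applies $d_0^v$ throughout, which under our bijection simply drops the last recorded coordinate. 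The interesting case is $1\le i\le n$, where the product $g_{n-i-1}d_0(g_{n-i})$ appears: this is precisely where the horizontal face $d_i^h$ acting on $NG_{n-i+1}$ uses the \emph{second} face map of the nerve $NG_{n-i+1}$, which multiplies two adjacent group elements — so the multiplication in $\overline{W}G$ is recovered from the composition law in the nerves. I would present this verification compactly, indicating the translation rather than writing out all $n+1$ components.

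As an alternative (and perhaps more conceptual) route, I could instead invoke the discussion immediately preceding the lemma: for a PTCP $P\to M$ with group $G$, the construction produces a classifying map $M\to TNG$, and one knows independently (from the theory of $WG$ and the universal PTCP) that $\overline{W}G$ classifies PTCPs with group $G$. Showing that $TNG$ carries a PTCP with the universal property would identify it with $\overline{W}G$ by uniqueness of classifying objects (representing objects are unique up to canonical isomorphism). However, making ``$TNG$ carries a universal PTCP'' precise still requires essentially the same simplicial bookkeeping, so I expect the direct computation above to be the more honest and self-contained path.

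The main obstacle is bookkeeping, not conceptual: correctly tracking the index shifts between the ``staircase'' indexing $(x_i)\in\prod_i X_{i,n-i}$ of $TX$ and the ``decreasing'' indexing $(g_{n-1},\dots,g_0)$ of $\overline{W}G$, and — at the one genuinely non-formal point — seeing that the group multiplication in the face operator $d_i$ of $\overline{W}G$ is exactly the image of the inner face map $d_1^h$ (or the relevant $d_j^h$) of the nerve $NG_{\bullet}$ under the equalizer identification. Once the indices are pinned down, naturality in $G$ is automatic since every map in sight is built from functorial operations on $\SS$ and $\S$.
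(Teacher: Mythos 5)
Your overall strategy is the same as the paper's: unwind the equalizer \eqref{equalizer2} for $X=NG$, set up a degreewise bijection with $(\Wbar G)_n$, and check that faces and degeneracies match Definition~\ref{def:Wbar G} (the paper writes the map in the opposite direction, via the explicit formula $x_i=(d_0^{i-1}(g_{n-1}),\dots,d_0(g_{n-i+1}),g_{n-i})$). However, there is a genuine error in your execution: you have transposed the bisimplicial indexing of $NG$. With the paper's conventions, $NG$ viewed as a vertical simplicial object has the nerve of $G_n$ as its horizontal simplicial set of $n$-simplices, so $NG_{i,j}=(NG_j)_i=G_j^{\,i}$, and the $i$-th component of an $n$-simplex of $TNG$ lies in $NG_{i,n-i}=G_{n-i}^{\,i}$ (an $i$-tuple in $G_{n-i}$), not in $G_i^{\,n-i}$ as you assert. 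Since $T$ treats the two directions asymmetrically (the equalizer pairs $d_0^v$ against $d_{i+1}^h$), this is not a harmless relabelling: in your convention the roles of the two kinds of faces are interchanged ($d^v$ becomes a nerve face and $d^h$ a face of the simplicial group), and the face-map check you defer as bookkeeping actually fails. Concretely, the paper's formula gives $d_0=(d_0^h p_1,\dots,d_0^h p_n)$ on $TX$ (all horizontal faces, not ``$d_0^v$ throughout'' as you claim), so in your convention $d_0$ applies the group face $d_0$ coordinatewise; under your ``leading coordinate'' bijection this sends $(g_{n-1},\dots,g_0)$ to $(d_0g_{n-1},\dots,d_0g_1)$, whereas Definition~\ref{def:Wbar G} has $d_0(g_{n-1},\dots,g_0)=(g_{n-2},\dots,g_0)$ --- it deletes $g_{n-1}$, not $g_0$ as you state. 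So the simplicial set you would identify is not $\Wbar G$ via your bijection.

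The repair is exactly the paper's sketch. With $x_i\in (NG_{n-i})_i$, the equalizer condition says that applying $d_0\colon G_{n-i}\to G_{n-i-1}$ coordinatewise to $x_i$ equals the last nerve face of $x_{i+1}$; hence an $n$-simplex is determined by the last nerve coordinate $g_{n-i}$ of each $x_i$ (with $x_0$ the basepoint), recovering the paper's formula. One then checks, now correctly, that $d_0$ of $TNG$ deletes $g_{n-1}$, that the inner \emph{horizontal} (nerve) faces produce the twisting terms $g_{n-i-1}d_0(g_{n-i})$ while the vertical components contribute the $d_j$'s applied to the higher entries, and that degeneracies insert identities as in Definition~\ref{def:Wbar G}; your remark attributing the product to an inner nerve face is right in spirit, but only after the indexing is fixed (in your convention the nerve faces are vertical, contradicting your own setup). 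Your alternative route via uniqueness of classifying objects is workable in principle but, as you concede, still requires essentially the same verification that $TNG$ carries a universal PTCP, and it is not the paper's argument.
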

This factorization of $\Wbar$ is due as far as we know to Duskin, who observed that this factorization persists 
when simplicial groups are replaced by simplicially enriched 
groupoids, i.e.\ the functor $\Wbar\colon \SGpd\to \S$ 
introduced by Dwyer and Kan in \cite{DK} also factors as $\Wbar = TN$ (this last observation 
also appears in the MSc thesis of Ehlers \cite{Ehlers}).
\begin{proof} 
This is an essentially straightforward computation, so we will just give a sketch of the details.  To an $n$-simplex 
of $\Wbar G$ consisting of a tuple    
\[
(g_{n-1},g_{n-2},\ldots, g_0) 
\]
as above, we associate the element $(x_0,x_1,\ldots, x_n)$ of $TNG$, where $x_0 = 1$ and 
\[
x_i = (d_0^{i-1}(g_{n-1}),d_0^{i-2}(g_{n-2}),\ldots, d_0(g_{n-i+1}),g_{n-i})\in (NG_{n-i})_i
\]
for $i\geq 1$.  This sets up a bijection $(\Wbar G)_n = (TNG)_n$ which respects face and degeneracy maps.  
\end{proof} 

It is well known that $\Wbar G$ is weakly equivalent to the simplicial set $dNG$, obtained by applying 
the diagonal functor to the degree-wise nerve $NG$ of the simplicial group $G$.  Of course 
this can be seen as an instance of Theorem~\ref{CR thm} in light of the identification 
$\Wbar G = TNG$, but there are easier proofs, see for example \cite{JL}.  In fact, $\Wbar G$ is simplicially 
homotopy equivalent to $dNG$, the point being that both $\Wbar G$ and $dNG$ are fibrant 
(a proof of the latter fact can be found in \cite{JT3}).  In \cite{T} it is shown via explicit calculation 
that the map $f\colon dNG\to \Wbar G$ 
defined by 
\[
f(h_1,\ldots,h_n) = (d_0(h_1),\ldots, d_0^n(h_n)) 
\]
for $h_i\in G_n$ exhibits $dNG$ as a deformation retract of $\Wbar G$.  
There is a further relationship between $dNG$ and $\Wbar G$ (see \cite{BH}): after passing to geometric realizations 
there is an {\em isomorphism} of spaces $|\Wbar G| = |dNG|$.  It is not clear that 
this isomorphism is induced by a simplicial map however.  It would be interesting 
to give a more conceptual proof of the isomorphism from \cite{BH}.  

There are several advantages of the description of 
$\overline{W}$ in Lemma~\ref{lem:Wbar G} over the 
traditional description.  One such advantage of the present description 
is that it becomes manifestly clear that $\Wbar$ has a left adjoint since both of the 
functors $N$ and $T$ do.

\begin{proposition} 
\label{left adjoint for Wbar}
A left adjoint for the functor $\overline{W} = T N$ is given by the functor 
\[
G = \pi_1 R\,  \Dec\colon \S\to s\Gp,  
\] 
where $R\colon \SS\to s\S_0$ is the left adjoint of the inclusion 
$s\S_0\subset \SS$.  
If $X$ is a simplicial set, then the value of $G$ on $X$ is the simplicial group $GX$ defined by  
\[ 
[n]\mapsto \pi_1(\Dec_n X/X_{n+1}).  
\] 
\end{proposition}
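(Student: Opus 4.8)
The plan is to realize $\overline{W}$ as a composite of functors, each admitting a left adjoint, and then to use the fact that the left adjoint of a composite is the composite of the left adjoints in the reverse order. By Lemma~\ref{lem:Wbar G} we have $\overline{W} = TN$, and we factor the degree-wise nerve through the category $s\S_0$ of simplicial objects in reduced simplicial sets:
\[
\overline{W}\colon s\Gp \xrightarrow{\,N\,} s\S_0 \hookrightarrow \SS \xrightarrow{\,T\,} \S .
\]
The first arrow does land in $s\S_0$ because each $NG_n$, being the nerve of the group $G_n$, has a single vertex; the middle arrow is the inclusion. Thus it suffices to identify the left adjoints of these three functors and to compute their composite.

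For the rightmost functor this is already done: $\Dec\dashv T$ was recalled in the previous section, so the left adjoint of $T$ is $\Dec$. For the inclusion $s\S_0\hookrightarrow\SS$, recall that reduced simplicial sets form a reflective subcategory of $\S$, with reflector sending $Y$ to the quotient $Y/\sk_0 Y$ obtained by collapsing the $0$-skeleton; since a reflective subcategory induces a reflective subcategory of the associated diagram categories, $s\S_0 = [\Delta^{\op},\S_0]$ is reflective in $\SS = [\Delta^{\op},\S]$, with reflector $R$ obtained by applying $Y\mapsto Y/\sk_0 Y$ in each simplicial degree. Finally, the nerve functor $N\colon\Gp\to\S_0$ has left adjoint the fundamental group functor $\pi_1$ --- for reduced $Y$, a simplicial map $Y\to NH$ is precisely a group homomorphism $\pi_1 Y\to H$ --- and applying this degree-wise yields the adjunction $\pi_1\dashv N$ for $N\colon s\Gp\to s\S_0$. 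Composing the three left adjoints in the reverse order gives $\pi_1 R\,\Dec$ as a left adjoint to $TN = \overline{W}$.

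It then remains to read off the explicit description of $GX = \pi_1 R\,\Dec X$. Regarding $\Dec X$ as a vertical simplicial object with horizontal simplicial sets, its value in vertical degree $n$ is the simplicial set $\Dec_n X$, whose set of $0$-simplices is $(\Dec X)_{0,n} = X_{n+1}$. Applying $R$ collapses, in each degree $n$, the $0$-skeleton of $\Dec_n X$, producing the reduced simplicial set $\Dec_n X/X_{n+1}$; applying $\pi_1$ degree-wise then gives the simplicial group $[n]\mapsto\pi_1(\Dec_n X/X_{n+1})$. Since this is obtained by composing functors it is automatically a simplicial group, so no separate verification of the simplicial identities is needed.

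Because the argument is a formal composition of adjunctions followed by an unwinding of definitions, there is no genuinely hard analytic step; what requires attention is the bookkeeping. One must check that the ``reduced'' direction in $s\S_0$ is the horizontal one used both for $NG$ and for $\Dec X$, correctly identify the two reflectors ($R$ as the degree-wise collapse of the $0$-skeleton, and $\pi_1$ as the fundamental group of a reduced simplicial set), and keep track of the index shift responsible for $(\Dec X)_{0,n} = X_{n+1}$ --- these are exactly the places where a convention error would produce the wrong formula.
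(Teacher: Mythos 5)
Your proposal is correct and follows essentially the same route as the paper: factor $\overline{W} = TN$ through $s\S_0$, compose the left adjoints $\Dec$, $R$ and $\pi_1$ of the three pieces, and identify $R\,\Dec X$ row-wise using $\sk_0\Dec_n X = X_{n+1}$. The paper's proof is just terser, taking the adjunctions $\Dec\dashv T$ and $\pi_1\dashv N$ for granted and only spelling out the description of $R$, whereas you make the composite-of-adjoints bookkeeping explicit.
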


\begin{proof} 
Observe that the functor $R$ is induced by the left adjoint of the inclusion 
$\S_0\subset \S$, i.e.\ the functor which sends a simplicial set $X$ to the 
reduced simplicial set $X/\sk_0X$.  To describe $RX$ for $X$ a bisimplicial 
set whose $n$-th row is $X_n$, we let $\sk_0X$ denote the bisimplicial set whose 
$n$-th row is $\sk_0X_n$, i.e.\ 
the constant simplicial set $[m]\mapsto X_{0,n}$.  Then $R\, X = X/\sk_0X$ so that 
the $n$-th row of $RX$ is $R\, X_n = X_n/X_{0,n}$.  The proposition then follows 
from the fact that $\sk_0\Dec_n X$ is the constant simplicial set $X_{n+1}$.    
\end{proof} 
 
Recall that a simplicial group $G$ is said to be a {\em loop group} for a 
simplicial set $X$ if there is a PTCP $P$ on $X$ with structure group 
$G$ such that $P$ is weakly contractible.  In \cite{Kan1} Kan showed that 
the left adjoint $G\colon \S\to s\Gp$ of the classifying complex functor 
$\Wbar$ had the property that $G(X)$ was a loop group for any reduced 
simplicial set $X$.  We will shortly give a simplified proof of his theorem by  
exploiting the description of $G$ given in Proposition~\ref{left adjoint for Wbar} above.  
Before we do this however we need the following lemmas. 

\begin{lemma} 
\label{lem:F}
Suppose that $X$ is a bisimplicial set whose first column 
is weakly contractible, i.e.\ the simplicial set $[n]\mapsto X_{0,n}$ is weakly contractible.  
Then $X \to R\, X$ is a column-wise weak equivalence.  
\end{lemma}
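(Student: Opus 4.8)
The plan is to work column by column. Since weak equivalences of simplicial sets are detected column-wise after applying the diagonal, it suffices to show that for each fixed $n$ the map $X_n \to R\,X_n = X_n/X_{0,n}$ on $n$-th columns is a weak equivalence of simplicial sets, where $X_{0,n}$ is the set of vertices sitting in horizontal degree $0$ of the $n$-th column. Wait — more carefully, the relevant statement is that $X \to R\,X$ is a column-wise weak equivalence, so I need the induced map on each column to be a weak equivalence, i.e. for each $m$ I want $X_{m,\bullet} \to (R\,X)_{m,\bullet}$ to be a weak equivalence. But $R$ quotients each row by its $0$-skeleton, so on the $m$-th column the map is $X_{m,\bullet} \to X_{m,\bullet}/X_{0,\bullet}$ (for $m=0$ this is the quotient of $X_{0,\bullet}$ down to a point, and for $m\geq 1$ it collapses the degenerate-in-the-horizontal-direction vertices). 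The key observation is that $X_{0,\bullet}$, which is weakly contractible by hypothesis, includes into each $X_{m,\bullet}$ via the horizontal degeneracies, and $R$ collapses exactly this image.

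First I would set up the precise identification of the $m$-th column of $R\,X$. From the definition recalled in Proposition~\ref{left adjoint for Wbar}, $R\,X$ has $n$-th row $X_n/X_{0,n}$, where $X_{0,n}$ is regarded as the constant (horizontally) simplicial set; so as a bisimplicial set $R\,X$ is the pushout of $\sk_0 X \leftarrow \sk_0 X \to X$ collapsing $\sk_0 X$ to a point in each row appropriately — more precisely $R\,X = X/\!\sk_0 X$ where $\sk_0 X$ is the sub-bisimplicial set whose $n$-th row is the constant simplicial set on $X_{0,n}$. Passing to the $m$-th column, $(\sk_0 X)$ has $m$-th column equal to $X_{0,\bullet}$ sitting inside $X_{m,\bullet}$ as the subspace of horizontally-totally-degenerate simplices, and $(R\,X)$ has $m$-th column $X_{m,\bullet}/X_{0,\bullet}$.

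Next I would exhibit the quotient map $X_{m,\bullet} \to X_{m,\bullet}/X_{0,\bullet}$ as a weak equivalence. The point is that the inclusion $X_{0,\bullet} \hookrightarrow X_{m,\bullet}$ is a cofibration of simplicial sets with weakly contractible source, so collapsing it does not change the weak homotopy type. Concretely I would argue that $X_{0,\bullet} \hookrightarrow X_{m,\bullet}$ is a monomorphism (it is split by any horizontal face map iterated down to degree $0$, and in any case monomorphisms of simplicial sets are exactly the levelwise injections), hence a cofibration in the standard model structure on $\S$; since $X_{0,\bullet}$ is weakly contractible, the cofiber $X_{m,\bullet}/X_{0,\bullet}$ receives a weak equivalence from $X_{m,\bullet}$ — this is the familiar fact that quotienting by a weakly contractible subcomplex is a weak equivalence, which follows because the pushout $X_{0,\bullet}\to \ast$, $X_{0,\bullet}\to X_{m,\bullet}$ is a homotopy pushout (one leg being a cofibration) and $\ast \to X_{m,\bullet}/X_{0,\bullet}$ is then the homotopy cofiber, so $X_{m,\bullet}\to X_{m,\bullet}/X_{0,\bullet}$ is a weak equivalence.

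Having shown the map is a column-wise weak equivalence, I am done: that is precisely the assertion of the lemma. The main obstacle I anticipate is purely bookkeeping — correctly identifying the $m$-th column of $\sk_0 X$ and of $R\,X$, and making sure the inclusion $X_{0,\bullet}\hookrightarrow X_{m,\bullet}$ is indeed a monomorphism of simplicial sets compatible with the collapsing, so that the cofiber identification is legitimate. Once that identification is in place, the homotopical input ("collapse a weakly contractible subcomplex") is standard. One caveat worth stating explicitly in the write-up: for $m=0$ the map is $X_{0,\bullet}\to \ast$, which is a weak equivalence directly by the hypothesis, so that degenerate case is handled by the same argument with $X_{m,\bullet}/X_{0,\bullet}=\ast$.
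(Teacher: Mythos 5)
Your argument is correct and is essentially the paper's own proof: you identify the $m$-th column of $\sk_0 X$ with the weakly contractible first column $[n]\mapsto X_{0,n}$, sitting inside $X_{m,\bullet}$ via the (split, hence monic) horizontal degeneracies, and conclude that collapsing this contractible subcomplex column by column is a weak equivalence, which is exactly the one-line argument the paper gives, with the standard homotopy-pushout justification and the $m=0$ case spelled out explicitly. The only blemish is the initial row/column slip ($X_n \to X_n/X_{0,n}$ on rows would not follow from the hypothesis), but you catch and correct it yourself, so the final argument is sound.
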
 
\begin{proof} 
For every $m\geq 0$, the vertical simplicial set $(\sk_0 X)_m$ is weakly contractible and so 
$X_m\to X_m/(\sk_0X)_m$ is a weak equivalence of vertical simplicial sets for every $m\geq 0$.  
\end{proof} 

\begin{lemma}
\label{lem:CW}
Let $X$ be a CW complex whose path components are all contractible.  Then $X/X^0$ is a $K(\pi,1)$, 
where $X^0$ denotes the set of vertices of $X$.  
\end{lemma}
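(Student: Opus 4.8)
The plan is to show that $X/X^0$ is homotopy equivalent to a connected graph, and then to invoke the classical fact that a connected $1$-dimensional CW complex is a $K(\pi,1)$ with $\pi$ a free group.

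First I would note that the inclusion $X^0\hookrightarrow X$ of the $0$-skeleton is a cofibration, so the strict pushout $X/X^0$ is also the homotopy pushout of the span $\ast\leftarrow X^0\hookrightarrow X$; equivalently it is the homotopy cofibre of $X^0\hookrightarrow X$. This is where the hypothesis is used: since every path component of $X$ is contractible, the canonical map $X\to\pi_0(X)$, with $\pi_0(X)$ regarded as a discrete space, is a weak homotopy equivalence. As the triangle $X^0\to X\to\pi_0(X)$ commutes, this weak equivalence of arrows identifies $X/X^0$, up to homotopy, with the homotopy cofibre of the map of discrete sets $f\colon X^0\to\pi_0(X)$ sending a vertex to the path component containing it.

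Next I would make this homotopy cofibre explicit: replacing $f$ by its mapping cylinder and then collapsing $X^0$ produces the $1$-dimensional CW complex $\Gamma$ whose vertices are $\{\ast\}\sqcup\pi_0(X)$ and which has one edge joining $\ast$ to $f(v)$ for each vertex $v$ of $X$. Because $X$ is a CW complex, every path component of $X$ contains at least one $0$-cell, so $f$ is surjective; hence every vertex of $\pi_0(X)$ is joined to $\ast$ by an edge and $\Gamma$ is connected. Thus $X/X^0\simeq\Gamma$. Finally, a connected graph is aspherical: collapsing a spanning tree exhibits $\Gamma$ as homotopy equivalent to a wedge of circles $\bigvee S^1\simeq K(F,1)$ with $F$ free (equivalently, the universal cover of a graph is a tree, hence contractible). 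Therefore $X/X^0$ is a $K(\pi,1)$ --- indeed it has the homotopy type of a wedge of circles.

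The only genuine subtlety, and the step I would most want to get right, is the passage to $\pi_0(X)$. It is tempting instead to pick a maximal spanning forest $T\subseteq X$ (so $T^0=X^0$), observe that $T/X^0$ is a wedge of circles, and argue that $T/X^0\hookrightarrow X/X^0$ is a homotopy equivalence because its cofibre $X/T\simeq\ast$ is contractible. This is \emph{not} valid in general: a cofibration with contractible cofibre need not be a homotopy equivalence once the subspace fails to be simply connected (for instance $A\hookrightarrow CA$ for $A$ an acyclic but non-contractible complex). The homotopy-pushout argument above avoids this precisely because there the space being replaced, namely $X$ itself, is honestly weakly equivalent to $\pi_0(X)$, not merely equivalent modulo its cofibre.
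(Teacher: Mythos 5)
Your proof is correct, but it takes a different route from the paper. The paper first writes $X/X^0$ as a wedge $\bigvee_{\alpha\in\pi_0(X)}X_\alpha/X^0_\alpha$ to reduce to the case of a connected, contractible $X$, then chooses a maximal tree $T\subseteq X^1$ and uses the fact (citing Whitehead) that the contractible complex $X$ admits a \emph{strong} deformation retraction onto the contractible subcomplex $T$; since this homotopy fixes $T\supseteq X^0$ pointwise, it descends to the quotient and exhibits $T/X^0$, a wedge of circles, as a deformation retract of $X/X^0$. You instead never choose a tree: you identify $X/X^0$ with the homotopy cofibre of the cofibration $X^0\hookrightarrow X$, use the hypothesis to replace $X$ by the discrete space $\pi_0(X)$ up to weak equivalence, and compute the homotopy cofibre of $X^0\to\pi_0(X)$ as a connected graph, hence a wedge of circles. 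Your argument buys model-independence (no tree, no appeal to Whitehead's deformation retraction theorem, only homotopy invariance of homotopy pushouts plus the surjectivity of $X^0\to\pi_0(X)$), while the paper's buys an explicit strong deformation retraction of $X/X^0$ itself. One remark on your closing caveat: the flawed shortcut you warn against (deducing that $T/X^0\hookrightarrow X/X^0$ is an equivalence merely because the cofibre $X/T$ is contractible) is indeed invalid, but it is not what the paper does; the paper's tree argument goes through the strong deformation retraction of $X$ onto $T$, which is legitimate precisely because $X$ is contractible and the homotopy is stationary on $X^0$, so the maximal-tree route can be, and in the paper is, made rigorous.
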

\begin{proof} 
$X/X^0$ can be written as a wedge 
\[ 
\bigvee_{\a\in \pi_0(X)}X_\a/X^0_\a, 
\]
where $X_\a$ denote the path components of $X$.  Therefore without loss of generality we can assume that $X$ is a path connected, pointed CW complex.  We then have to show that 
$X/X^0$ is a $K(\pi,1)$.  Choose a strong deformation retraction of $X$ onto a maximal tree $T$ 
in the 1-skeleton $X^1$ of $X$ (see for example I Theorem 5.9 of \cite{Whitehead}).  Then $T/X^0$ is a deformation retract of $X/X^0$ and so 
$X/X^0$ is a wedge of circles, from which the result follows.
\end{proof}
\begin{corollary}
\label{cor:CW}
For any simplicial set $X$, $\Dec_nX/X_{n+1}$ has the weak homotopy type of a $K(\pi,1)$.
\end{corollary}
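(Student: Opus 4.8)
The plan is to obtain the corollary as an immediate consequence of Lemmas~\ref{lem:def retract} and~\ref{lem:CW}, together with the fact that geometric realization turns $\sk_0$ into the $0$-skeleton. First I would record that $\Dec_n X$ is of the form $\Dec_0 W$, with $W = \Dec_{n-1}X$ when $n\geq 1$ and $W = X$ when $n = 0$; so Lemma~\ref{lem:def retract} exhibits the augmentation $\Dec_n X \to W_0 = X_n$ as a deformation retraction of $\Dec_n X$ onto the constant (discrete) simplicial set $X_n$. In particular $\Dec_n X$ is simplicially homotopy equivalent to a discrete simplicial set, so $|\Dec_n X|$ is a CW complex all of whose path components are contractible.

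Next I would identify the quotient appearing in the statement. As noted in the proof of Proposition~\ref{left adjoint for Wbar}, $\sk_0\Dec_n X$ is the constant simplicial set on the set $X_{n+1} = (\Dec_n X)_0$, so that $\Dec_n X/X_{n+1}$ is exactly the reduced simplicial set $\Dec_n X/\sk_0\Dec_n X$. Since geometric realization is a left adjoint it preserves the pushout that defines this quotient, and since $|\sk_0 Z| = |Z|^0$ for any simplicial set $Z$, we get a homeomorphism $|\Dec_n X/X_{n+1}| \cong |\Dec_n X|/|\Dec_n X|^0$. Applying Lemma~\ref{lem:CW} to the CW complex $|\Dec_n X|$, whose path components are contractible by the previous paragraph, we conclude that $|\Dec_n X/X_{n+1}|$ is a $K(\pi,1)$, which is the assertion of the corollary.

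I do not expect a genuine obstacle here, since the statement is essentially a repackaging of the two preceding lemmas; the only points needing a little care are the index bookkeeping that identifies $X_{n+1}$ with $\sk_0\Dec_n X$ (so that the quotient in the corollary really is the one collapsing the $0$-skeleton) and the compatibility of geometric realization with this quotient and with the passage from $\sk_0$ to the $0$-skeleton, both of which are routine.
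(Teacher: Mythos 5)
Your proof is correct and takes essentially the same route as the paper's: write $\Dec_n X$ as $\Dec_0$ applied to $\Dec_{n-1}X$, use Lemma~\ref{lem:def retract} to see that the realization has contractible path components, and then apply Lemma~\ref{lem:CW} to the quotient by the vertex set $X_{n+1}=\sk_0\Dec_nX$. You simply spell out the routine compatibility of geometric realization with $\sk_0$ and with the quotient, which the paper leaves implicit.
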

\begin{proof}
Since $\Dec_n X = \Dec_0 \Dec_{n-1}X$, it is enough to 
prove this for $\Dec_0X/X_1$.  However this follows immediately 
from the Lemma since $\Dec_0X$ deformation retracts onto $X_0$ (see Lemma~\ref{lem:def retract}).  
\end{proof}
With a little extra effort one can use this corollary to construct an explicit isomorphism between $GX$ 
and the simplicial group described by Kan in \cite{Kan1}, however we will not do this here.  
 
We can now give a simple proof of Kan's result from 
\cite{Kan1} that $X\to \overline{W}GX$ is a weak equivalence when $X$ is reduced.  
We will need the following property of the total simplicial 
set functor $T$: as observed in \cite{CR}, since $d$ sends level-wise weak 
equivalences of bisimplicial sets to weak equivalences of simplicial sets, Theorem~\ref{CR thm} implies that this 
property is inherited by $T$.  
As an immediate consequence of this observation, Cegarra and Remedios prove the following: 

\begin{lemma}[\cite{CR}]
\label{cor:CR}
For any simplicial set $X$, the unit map $X\to T \Dec\, X$ is a weak equivalence. 
\end{lemma}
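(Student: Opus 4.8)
The plan is to exhibit the unit $\eta\colon X\to T\Dec X$ as a section of a weak equivalence $T\Dec X\to X$, after which the two-out-of-three property finishes the proof. That weak equivalence will be obtained by applying $T$ to the column augmentation $\epsilon_c\colon \Dec X\to p_2^*X$ and then using the identification $Tp_2^*X = X$ of Lemma~\ref{calculation for constant object}; the essential input is the observation recorded just above --- a consequence of Theorem~\ref{CR thm} --- that $T$ sends level-wise weak equivalences of bisimplicial sets to weak equivalences of simplicial sets.

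The first step is to check that $\epsilon_c\colon \Dec X\to p_2^*X$ is a row-wise weak equivalence. Fix $n$. By Lemma~\ref{lem:dec} the $n$-th row of $\Dec X$ is the $(n+1)$-fold d\'{e}calage $(\Dec_0)^{n+1}X$, whose set of $m$-simplices is $X_{m+n+1}$, and the $n$-th row of $p_2^*X$ is the constant simplicial set on $X_n$. Unwinding the definitions, $\epsilon_c$ on this row is exactly the augmentation of $(\Dec_0)^{n+1}X$ over $X_n$; in bidegree $(m,n)$ it is the iterated first face $d_0^{m+1}\colon X_{m+n+1}\to X_n$. Applying Lemma~\ref{lem:def retract} $n+1$ times, this augmentation is a deformation retract, hence a weak equivalence. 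Therefore $\epsilon_c$ is a row-wise weak equivalence, and so $T\epsilon_c\colon T\Dec X\to Tp_2^*X = X$ is a weak equivalence; call it $r$.

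It remains to verify that $r\circ\eta = \id_X$. Tracing through the equalizer~\eqref{equalizer2} for the constant column object $p_2^*X$ shows that the isomorphism $Tp_2^*X\cong X$ is projection onto the $0$-th factor, so $r$ sends an $n$-simplex $(y_0,\ldots,y_n)$ of $T\Dec X$ to $d_0(y_0)\in X_n$, where $d_0\colon X_{n+1}\to X_n$ is the first face map (the bidegree $(0,n)$ component of $\epsilon_c$). Combined with the explicit formula~\eqref{unit map} for the unit, $\eta(x) = (s_0x,\ldots, s_nx)$, this gives $r(\eta(x)) = d_0s_0x = x$. Hence $\eta$ is a section of the weak equivalence $r$, and since $\id_X$ is a weak equivalence, the two-out-of-three property shows that $\eta$ is a weak equivalence.

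The one point needing genuine care is the identification in the first step of each row of $\epsilon_c$ with the retraction furnished by the contraction of the iterated d\'{e}calage --- this is just a matter of unwinding the indexing conventions for $\Dec_0$ and the formula for the column augmentation --- after which the rest is formal or a one-line computation. (One could run the argument just as well with the row augmentation $\epsilon_r\colon \Dec X\to p_1^*X$, which is a column-wise weak equivalence, in place of $\epsilon_c$, using $Tp_1^*X = X$.)
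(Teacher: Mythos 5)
Your proof is correct and follows essentially the same route as the paper: there the unit is likewise exhibited as a section of $T$ applied to an augmentation of $\Dec X$ onto a constant object (the paper uses the row augmentation $\epsilon_r\colon \Dec X\to p_1^*X$ together with $Tp_1^*X = X$, where you use $\epsilon_c$ and $Tp_2^*X = X$), combined with the fact that $T$ preserves level-wise weak equivalences as a consequence of Theorem~\ref{CR thm}. The only difference is that you verify the retraction identity and the level-wise contractibility explicitly, details the paper attributes to \cite{CR}.
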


We briefly review the proof of this result from \cite{CR}.  
\begin{proof} 
Cegarra and Remedios observe that the composite of the unit $X\to T\Dec X$ with the map 
$T\Dec X\to Tp_1^*X$ is the identity on $X$, in light of the identification $Tp_1^*X = X$ of 
Lemma~\ref{calculation for constant object}.  Since $T$ sends level-wise weak equivalences to weak equivalences it 
follows that $T\Dec X\to X$ is a weak equivalence and hence the unit map is a weak equivalence.  
\end{proof} 

We are now ready to prove that $GX$ is a loop group for $X$ whenever $X$ is reduced.   

\begin{theorem}[\cite{Kan1}] 
\label{Kan's thm}
Let $X$ be a reduced simplicial set.  Then the unit map  
\[
\eta\colon X\to \overline{W}GX
\]
is a weak equivalence.  Hence $GX$ is a loop group for $X$.  
\end{theorem}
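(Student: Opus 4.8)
The plan is to use the factorizations $\overline{W}=TN$ (Lemma~\ref{lem:Wbar G}) and $G=\pi_1 R\,\Dec$ (Proposition~\ref{left adjoint for Wbar}) to break the unit $\eta$ into three pieces, each of which is visibly a weak equivalence, and then to read off the loop group statement. The functor $G=\pi_1\circ R\circ\Dec$ is a composite of left adjoints --- namely of $N$, of the inclusion $s\S_0\subset\SS$, and of $T$ --- the corresponding composite of right adjoints being $\overline{W}=TN$. Hence the unit $\eta\colon X\to\overline{W}GX$ is the composite
\[
X \xrightarrow{u_1} T\Dec X \xrightarrow{T(u_2)} TR\Dec X \xrightarrow{T(u_3)} TN\pi_1 R\Dec X=\overline{W}GX,
\]
where $u_1$ is the unit of $\Dec\dashv T$, $u_2\colon\Dec X\to R\Dec X$ is the canonical quotient map, and $u_3\colon R\Dec X\to N\pi_1 R\Dec X$ is, row by row, the canonical map $\Dec_nX/X_{n+1}\to N\pi_1(\Dec_nX/X_{n+1})$ from a reduced simplicial set to the nerve of its fundamental group. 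The first thing I would do is check that this composite really is $\eta$, which amounts to unwinding the three adjunctions.

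I would then handle the three factors separately. The map $u_1$ is a weak equivalence by Lemma~\ref{cor:CR}. For $u_2$, note that the first column $[n]\mapsto(\Dec X)_{0,n}=X_{n+1}$ of the bisimplicial set $\Dec X$ is precisely $\Dec^0X$, which deformation retracts onto $X_0$ by Lemma~\ref{lem:def retract}; since $X$ is reduced, $X_0$ is a point, so this column is weakly contractible and Lemma~\ref{lem:F} shows that $u_2$ is a column-wise weak equivalence. For $u_3$, Corollary~\ref{cor:CW} says that each $\Dec_nX/X_{n+1}$ has the weak homotopy type of a $K(\pi,1)$, so the canonical map from it to the nerve of its fundamental group is a weak equivalence; thus $u_3$ is a row-wise weak equivalence. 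Since $d$ --- and hence, by Theorem~\ref{CR thm}, also $T$ --- carries level-wise weak equivalences of bisimplicial sets to weak equivalences, both $T(u_2)$ and $T(u_3)$ are weak equivalences, and therefore so is $\eta$.

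To conclude that $GX$ is a loop group for $X$, I would pull back the universal PTCP $WGX\to\overline{W}GX$ along $\eta$ to obtain a PTCP $X_\eta$ on $X$ with structure group $GX$, together with a $GX$-equivariant map $X_\eta\to WGX$ over $\eta$ that restricts to the identity on fibres. Since PTCPs are Kan fibrations, comparing the long exact homotopy sequences of $X_\eta\to X$ and $WGX\to\overline{W}GX$ and applying the five lemma shows that $X_\eta\to WGX$ is a weak equivalence; as $WGX$ is weakly contractible, so is $X_\eta$, which is what it means for $GX$ to be a loop group for $X$.

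The only genuinely non-formal ingredient is what feeds into $u_3$: that $\Dec_nX/X_{n+1}$ is a $K(\pi,1)$ (which rests on the contractibility of the d\'{e}calage, Lemma~\ref{lem:def retract}) together with the standard fact that a reduced $K(\pi,1)$ is weakly equivalent to the nerve of its fundamental group. Everything else is either quoted from the earlier sections or is an instance of the preservation of level-wise weak equivalences by $T$ --- which is exactly where Theorem~\ref{CR thm} enters.
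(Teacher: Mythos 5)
Your proof is correct and takes essentially the same route as the paper: factor $\eta$ through the units of the adjunctions $\Dec\dashv T$, $R\dashv U$ and $\pi_1\dashv N$, handle the first map by Lemma~\ref{cor:CR}, and handle the other two as level-wise weak equivalences via Lemma~\ref{lem:F} and Corollary~\ref{cor:CW}, using that $T$ (like $d$) preserves level-wise weak equivalences. The explicit deduction that $GX$ is a loop group, via pulling back $WGX\to\overline{W}GX$ and comparing fibration sequences, is the standard argument the paper leaves implicit, and it is fine.
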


\begin{proof}
The units of the adjunctions $\Dec\dashv T$, $R\dashv U$, and $N_0\dashv \pi_1$ give a factorization of $\eta$   
\[ 
X\to T\Dec X\to T R\, \Dec\,  X\to T N\pi_1 R\, \Dec\,  X  
\]
in $\S$.  The map $X\to T \Dec\,  X$ is a weak equivalence by Lemma~\ref{cor:CR}.  
The maps $T \Dec\,  X\to T R\, \Dec\,  X$ and 
$T R\, \Dec\,  X\to T \pi_1 R\, \Dec\,  X$ are induced by the maps 
\[ 
\Dec\,  X\to R\, \Dec\,  X\ \text{and}\  R\, \Dec\,  X\to  N\pi_1R\, \Dec\,  X 
\] 
in $\SS$. We will show that both of these maps are level-wise weak equivalences.  
The first map is a level-wise weak equivalence by Lemma~\ref{lem:F}, since $\sk_0\Dec X = \Dec^0X$ and $X$ is reduced.  Corollary~\ref{cor:CW} shows that $R\, \Dec_n X$ has the weak homotopy type of a $K(\pi,1)$ 
and so the second map is also a level-wise weak equivalence.    
\end{proof}

\subsubsection*{Acknowledgements} I thank 
Tom Leinster for some useful conversations.  I am grateful to Pouya Adrom, 
John Baez, Tim Porter, Birgit Richter and David Roberts for useful 
comments.  I thank Jean-Louis Loday for the reference to Quillen's paper.  
I am especially grateful to Myles Tierney 
for informing me about his work with Joyal and for several 
very useful conversations, to Sebastian Thomas for pointing out an error 
in an earlier version of this paper and for the reference to his paper, and to 
Jim Stasheff for many detailed comments and suggestions.

\end{document}